\documentclass[12pt]{article}
\usepackage[utf8]{inputenc}
\RequirePackage[OT1]{fontenc}
\RequirePackage[numbers]{natbib}

\usepackage{graphicx,amsmath,amsfonts,amssymb,amsthm,psfrag, framed,thmtools,cancel,float}
\usepackage[dvipsnames]{xcolor}
\usepackage{fullpage}
\declaretheoremstyle[
spaceabove=6pt, spacebelow=6pt,
headfont=\normalfont\bfseries,
notefont=\mdseries, notebraces={(}{)},
headpunct=.\,— ,
bodyfont=\normalfont,
numbered=no
]{solu}

\declaretheorem[name=Calculus, style=solu, preheadhook = \color{Green}]{Calculus}

\usepackage{comment}
\excludecomment{Com}
\excludecomment{Calculus}
\excludecomment{Mistake}

\newcommand{\ds}{\displaystyle}

\newcommand{\dint}{\mathrm{d}}

\newcommand{\ai}{A^{(i)}}
\newcommand{\af}{A^{(f)}}


\newtheorem{thm}{Theorem}[section]
\newtheorem{corollary}[thm]{Corollary}
\newtheorem{lemma}[thm]{Lemma}

\newtheorem{definition}[thm]{Definition}

\newtheorem{rem}[thm]{Remark}

\numberwithin{equation}{section}

\title{Strong approximation of Bessel processes\protect\thanks{This work has been supported by the project PERISTOCH ANR–19–CE40–0023, 2020--2024 of the French National Research Agency (ANR)}}

\begin{document}
\author{Madalina Deaconu$^1$ and Samuel Herrmann$^2$
\\[5pt]
\small {$^1$Universit\'e de Lorraine, CNRS, Inria, IECL, F-54000 Nancy, France,}\\
\small{Madalina.Deaconu@inria.fr}\\[5pt]
\small{$^2$Institut de Math{\'e}matiques de Bourgogne (IMB) - UMR 5584, CNRS,}\\
\small{Universit{\'e} de Bourgogne Franche-Comt\'e, F-21000 Dijon, France} \\
\small{Samuel.Herrmann@u-bourgogne.fr}
}

\maketitle

\begin{abstract}

 We consider the path approximation of Bessel processes and develop a new and efficient algorithm. This study is based on a recent work by the authors, on the path approximation of the Brownian motion, and on the construction of specific own techniques. It is part of the family of the so-called $\varepsilon$-strong approximations. More precisely, our approach constructs jointly the sequences of exit times and corresponding exit positions of some well-chosen domains, the construction of these domains being an important step. Based on this procedure, we emphasize an algorithm which is easy to implement. Moreover, we can develop the method for any dimension.  We treat separately the integer dimension case and the non integer framework, each situation requiring appropriate techniques. In particular, for both situations, we show the convergence of the scheme and provide the control of the efficiency with respect to the small parameter  $\varepsilon$. We expand the theoretical part by a series of numerical developments. 
 
\end{abstract}
{\small
\noindent \textbf{Key words:} Strong approximation, path simulation, Bessel process, Brownian exit time.
\par\medskip

\noindent \textbf{2010 AMS subject classifications:} primary 
65C05;   	
secondary 
60J60,       
60J25,  	
60G17,      
60G50.  	
} 	

\section*{Introduction}\label{sec:intro}

Diffusion processes play a central role in the modelling and study of the behaviour of physical phenomena, of biological problems or of financial products pricing, it is thus of prime interest to develop numerical approaches to characterize and analyze their stochastic trajectories. However, a trajectory is an infinite mathematical object which cannot be generated directly, an approximation procedure and its corresponding error control need therefore to be emphasized. 
 
The Euler scheme is one of the classical standard schemes to get numerical approximated solutions of stochastic differential equations. Indeed, a first method to approximate stochastic processes is the common time-discretization procedure: only the values of the diffusion process on some finite deterministic time grid $t_1<t_2<\ldots<t_n$ are described (or approximated), as in the usual Euler or modified Euler scheme and the literature contains many convergence results in the small time step limit.
In the standard case, that is under the conditions that ensure the existence and uniqueness of the solution of the SDEs, the numerical analysis is well-developed and a large variety of different numerical approximation schemes is available. We refer, in this framework of time splitting procedure to the important work  \cite{kloedenplaten1999Euler}. 
A huge literature is studying this approach and we can find results on the weak convergence  \cite{bally1995weakeuler} or the strong convergence. It is thus well-known, that under suitable conditions on the coefficients of the SDEs, the Euler scheme has strong rate of convergence $1/2$, \cite{kloedenplaten1999Euler}. A very good review on the results on the Euler method and its higher-order extensions can be found in \cite{jourdainhiga2011euler}, and for particular  diffusions interesting techniques are developed for instance in \cite{dereich2012eulercir}, \cite{gobet2000}, \cite{muller2020eulernonsmooth}, and many others. 

Such schemes often suffer in terms of efficiency as the computational time is high and  it is difficult to solve the trade-off between reducing the error and obtaining a satisfactory computational time. Thus, in non standard cases, other methods need to be developed.

An alternative approach for one-dimensional diffusions is to squeeze the stochastic trajectory $(X_t)_{t\ge 0}$ under observation inbetween two simple to exhibit paths, depending on a small parameter $\varepsilon>0$: an upper and a lower trajectory $(X_t^{\uparrow,\varepsilon})_{t\ge 0}$ respectively  $(X_t^{\downarrow,\varepsilon})_{t\ge 0}$.  Obviously, these two curves need first to be easy to generate numerically (we should avoid infinite mathematical objects) and secondly be such that their difference can be controlled with respect to the parameter $\varepsilon$: on any finite time interval $[0,T]$,
\[
X_t^{\uparrow,\varepsilon}\ge X_t\ge X_t^{\downarrow,\varepsilon},\quad \forall t\in [0,T]\quad \mbox{and}\quad \lim_{\varepsilon\to 0}\sup_{t\in [0,T] }(X_t^{\uparrow,\varepsilon}-X_t^{\downarrow,\varepsilon})=0\quad \mbox{a.s.}
\]
There is a challenging dual objective: to point out some upper and lower convergent bounding processes $X^{\uparrow,\varepsilon}$ and $X^{\downarrow,\varepsilon}$ on one hand, and to get a precise convergence result on the other hand. One interesting approach is to link the construction of the bounds with the simulation of the diffusion exiting from thin horizontal layers. Such an approach can be seen through to its successful completion using both the precise description of the Brownian paths, in particular Brownian meanders, and the exact simulation method (rejection sampling), see the seminal paper of Chen and Huang \cite{chen2013localization} and subsequent developments concerning killed diffusions \cite{casella2008exact}, jump diffusions \cite{giesecke2013exact} or further techniques linked to the $\varepsilon$-strong approximation \cite{pollock2016exact}. 
These approaches concern mainly classical diffusions or jump diffusions with regular coefficients. 

\begin{figure}[h]
\centering
\includegraphics[width=8.5cm]{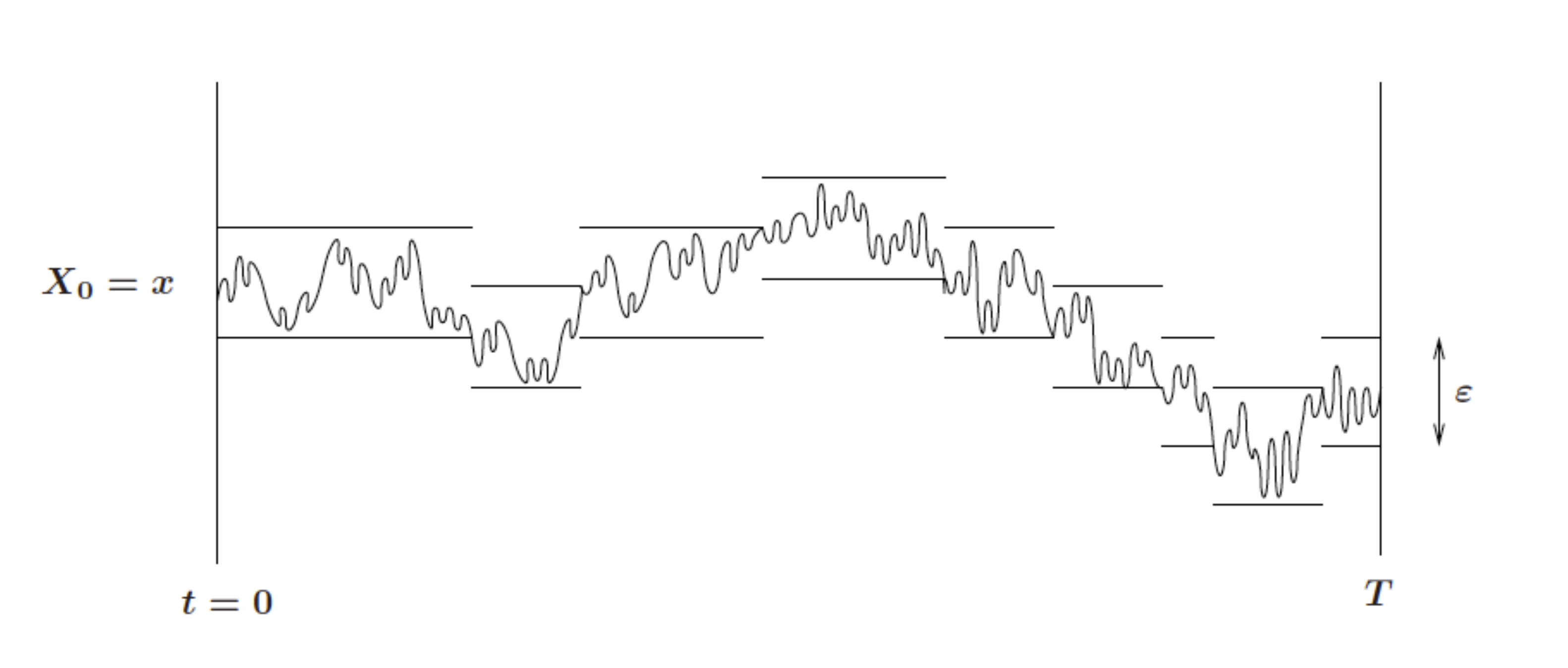}\hspace*{-0.5cm}\includegraphics[width=8.5cm]{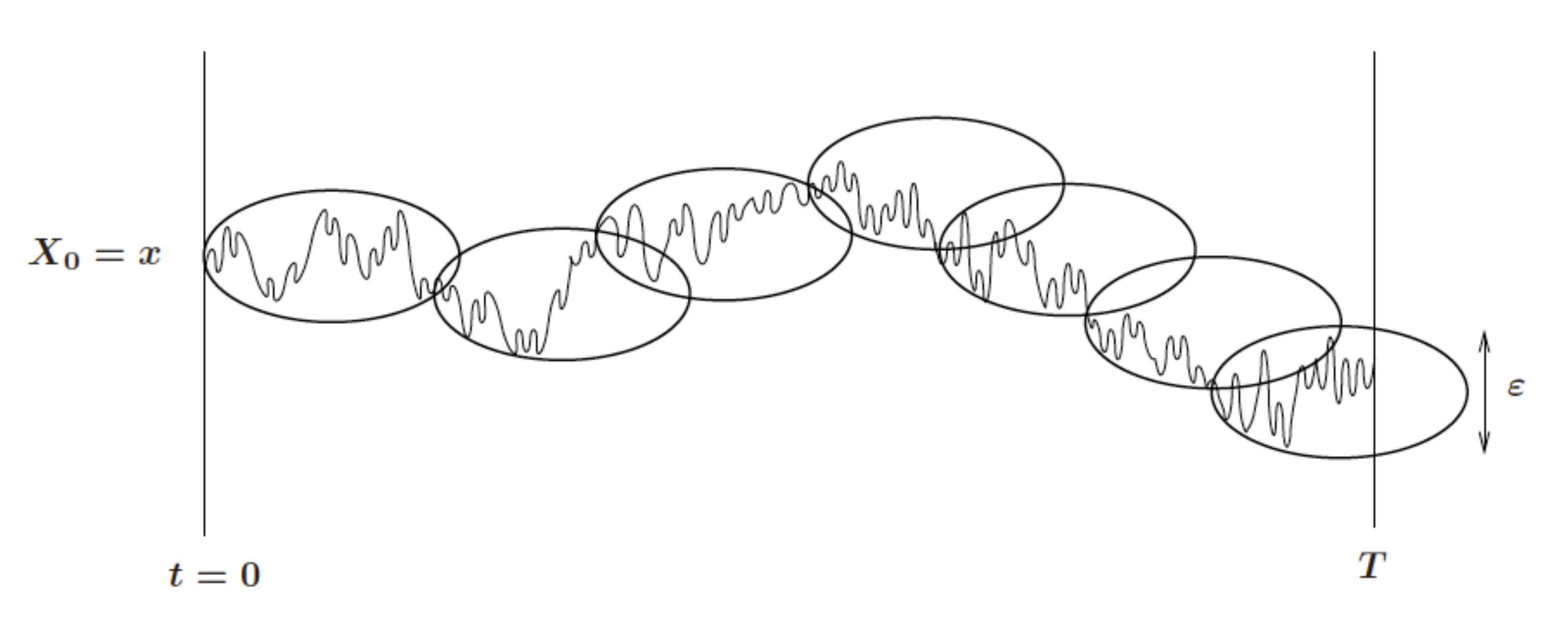}
\caption{\small {Illustration of $\varepsilon$-strong approximation of diffusion paths based on successive exit times of small intervals (left)
or successive exits of small spheroids (right)}}
\label{fig:my_label0}
\end{figure}

The aim of this paper is to develop a different construction for the $\varepsilon$-strong approximation
of Bessel trajectories. The stochastic differential equation satisfied by the Bessel process
presents a singular drift coefficient. We shall thus emphasize an alternative approach which
is not based on exit times of thin horizontal layers but rather on exit times of specific
spheroids (see Figure \ref{fig:my_label0}). There is a substantial numerical gain to adopt this new point of
view since the spheroid exit times are easy to generate and the rejection sampling linked to
the exact simulation of a paths skeleton can be avoided. The new algorithm we propose concerns Bessel
processes of integer or non integer dimensions and is based on observations of the Brownian
trajectories.
Let us first introduce the definition of the $\varepsilon$-strong approximation that we shall consider
throughout our study.
\begin{definition}\label{def}
The random process $(y_t^\varepsilon)$ is an $\varepsilon$-strong approximation of the diffusion process $(X_t)$ if there exists $(x^\varepsilon_t)$ satisfying
\begin{equation}
\label{eq:def}
\sup_{t\in[0,T]}|X_t-x^\varepsilon_t|\le \varepsilon\quad\mbox{a.s.}
\end{equation}
such that $(y^\varepsilon_t)$ and $(x^\varepsilon_t)$ are identically distributed.
\end{definition}
In the Brownian framework, such an approximation is available using the exit times
of specific spheroids \cite{deaconu2020strong}. Let us consider the curve directly linked to the shape of the
$\delta$-dimensional heat ball:
\begin{equation}
\label{def:phi}
\phi_{\delta,\varepsilon}(t):=\sqrt{\delta t\ln\Big(\frac{e\varepsilon^2}{\delta t }\Big)}, \quad \mbox{for}\ t\in I_{\delta, \varepsilon}:=[0,r_{\delta,\varepsilon}]\ \mbox{with}\ r_{\delta,\varepsilon}=\frac{e\varepsilon^2}{\delta}.
\end{equation}
We can notice that the maximum of the curve is equal to $\varepsilon$ and is reached for $t_{\rm max}=\varepsilon^2/\delta$. The approximation is based on the so-called Brownian skeleton $({\rm BS})_\eta$. We recall now the particular situation $\eta\equiv 1$.

\begin{framed}
\centerline{\sc Brownian Skeleton $({\rm BS})_1$}

\vspace*{0.2cm}
\begin{enumerate}
\item Let $(A_n)_{n\ge 1}$  be i.i.d. random variables with gamma distribution ${\rm Gamma}(3/2,2)$.
\item Let $(Z_n)_{n\geq 1}$ be i.i.d. Rademacher random variables (taking values +1 or -1 with probability 1/2). The sequences  $(A_n)_{n\ge 1}$ and $(Z_n)_{n\geq 1}$ are  independent.
\end{enumerate}

\noindent
{\bf  Definition:} Let $\varepsilon>0$. The \emph{Brownian skeleton} $({\rm BS})_1$ is defined by
\[
\Big((u_n^\varepsilon,s_n^\varepsilon)_{n\ge 1},(x_n^\varepsilon)_{n\ge 0}\Big)\quad\mbox{with}\quad \left\{
\begin{array}{l}
u_n^\varepsilon=\varepsilon^2 \,e^{1-A_n},\quad 
 s^\varepsilon_n=\ds\sum_{k=1}^nu_k^\varepsilon,\\[18pt]
  x_{n}^\varepsilon=x_{n-1}^\varepsilon+ Z_n \,\phi_{1,\varepsilon}( u_{n}^\varepsilon),\ \forall n\ge 1 \\
 \end{array}
 \right.
\]
and $x_0^\varepsilon=x$.
\end{framed}

The authors proved in \cite{deaconu2020strong} that the piecewise constant function 
\[
x^\varepsilon_t=\sum_{n\ge 0}x_n^\varepsilon1_{\{  s_n^\varepsilon \le t<s^\varepsilon_{n+1}\}}
\]
is an $\varepsilon$-strong approximation of the Brownian paths starting in $x$. Moreover the number of random points used to approximate the trajectory on a fixed time interval $[0,T]$, denoted by $N_T^\varepsilon$
\begin{equation}
\label{def:N}
N_T^\varepsilon:=\inf\{n\ge 1:\ s_n^\varepsilon\ge T\},
\end{equation}
satisfies
\[
\lim_{\varepsilon\to 0}\varepsilon^2\mathbb{E}[N^\varepsilon_T]=\kappa \cdot T\quad\mbox{with}\quad \kappa=3^{3/2}e^{-1}.
\]
This means that, the cost of the numerical approximation is of the order $T/\varepsilon^2$.

 The approximation procedure presented in \cite{deaconu2020strong} is based on a piecewise constant function whose intersection with the Brownian paths corresponds to the set of points $\{(s_n^\varepsilon,x_n^\varepsilon):\ n\ge 0\}$. The sequence $(s_n^\varepsilon)_{n\ge 0}$ is built using the random variables $(u_n^\varepsilon)_{n\ge 1}$ which represent Brownian exit times from some typical spheroid defined by $\phi_{1,\varepsilon}$. Thus, the sequence of points $\{(s_n^\varepsilon,x_n^\varepsilon):\ n\ge 0\}$ is obtained as the successive Brownian exit points of time-space spheroids of size $\varepsilon$: the Brownian path is therefore completely controlled inbetween two successive points of the skeleton.

In this paper we aim to adapt and develop this technique to the approximation of Bessel processes. Whenever the dimension of the Bessel process is an integer, the paths are distributed like the norm of a multidimensional Brownian motion. Consequently, the successive exit points of spheroids for that Brownian trajectory allows to build a Bessel skeleton, the main ingredient of the $\varepsilon$-strong approximation. In the framework of Bessel processes with non integer dimension, the construction of the algorithm is more difficult: we still use successive exit points of spheroids but we first have to decompose the Bessel paths into two independent parts following the flagship identity of Shiga and Watanabe \cite{Shiga}.

In both cases, for integer or non integer dimensions, we develop the corresponding approximation scheme and prove the results that characterize and control its behaviour.

\section{Bessel processes with integer dimension}\label{sec:integer}
The Bessel process of dimension $\delta$ is the unique solution of the following stochastic differential equation:
\begin{equation}
\label{eq:def:Be}
Z_t^{\delta,y}=y+\frac{\delta-1}{2}\int_{0}^t (Z_s^{\delta,y})^{-1}\,ds+B_t,\quad t\ge 0,
\end{equation}
where $y\ge 0$ is the deterministic initial value of the process and $(B_t)_{t\ge 0}$ stands for a one dimensional standard Brownian motion. In particular, for integer values of $\delta$, the Bessel process and the norm of a $\delta$-dimensional Brownian motion are identically distributed. This classical property shall play a crucial role in the sequel. Let us denote by $\nu$ the so-called Bessel index related to the dimension by the following relation $\nu=\frac{\delta}{2}-1$.
\begin{framed}
\centerline{\sc Bessel Skeleton $({\rm BeS})_\delta$ -- integer dimension}

\vspace*{0.2cm}
\begin{enumerate}
\item Let $(A_n)_{n\ge 1}$  be a sequence of i.i.d. random variables with gamma distribution ${\rm Gamma}(\nu+2,1/(\nu+1))$ that is the shape equals $\nu+2$ and the scale $1/(\nu+1)$.
\item Let $(V_n)_{n\ge 1}$ be a sequence of i.i.d. uniformly distributed random vectors on the boundary of the unitary sphere $\mathcal{S}^\delta$. We denote by $\pi_1(V_n)$ the projection on
the first coordinate. The sequences 
 $(A_n)_{n\ge 1}$ and $(V_n)_{n\ge 1}$ are assumed to be independent. 
\end{enumerate}

\noindent
{\bf  Definition:} For $\varepsilon>0$, the \emph{Bessel skeleton} $({\rm BeS})_\delta$ is given by $\Big((u_n^\varepsilon,s_n^\varepsilon)_{n\ge 1},(y_n^\varepsilon)_{n\ge 0}\Big)$
\[
\quad\mbox{with}\  \left\{
\begin{array}{l}
u_n^\varepsilon=\frac{\varepsilon^2}{\delta} \,e^{1-A_n},\quad 
 s^\varepsilon_n=\ds\sum_{k=1}^nu_k^\varepsilon,\\[18pt]
  y_n^\varepsilon=\Big((y_{n-1}^\varepsilon)^2+ 2\pi_1(V_n)\,y_{n-1}^\varepsilon \,\phi_{\delta,\varepsilon}( u_{n}^\varepsilon)+\phi_{\delta,\varepsilon}^2( u_{n}^\varepsilon)\Big)^{1/2}, \forall n\ge 1 \\
 \end{array}
 \right.
\]
and $y_0^\varepsilon=y$.

\end{framed}
The Bessel skeleton permits to construct an approximation of the Bessel trajectory. The main idea leading to this construction is first to relate the Bessel process to the norm of a $\delta$-dimensional Brownian motion. Secondly we replace the Brownian trajectory by a Brownian skeleton: a random walk corresponding to the successive exits of a sequence of small time-space spheroids.
\begin{thm}\label{thm:Bess1}
Let $\varepsilon>0$ and let us consider a Bessel skeleton $({\rm BeS})_{\delta}$ with $\delta\in\mathbb{N}^*$. Then $y^\varepsilon_t=\sum_{n\ge 0}y_n^\varepsilon1_{\{  s_n^\varepsilon \le t<s^\varepsilon_{n+1}\}}$ is an $\varepsilon$-strong approximation of the Bessel paths starting in $y$, solution of \eqref{eq:def:Be}. 
The number of approximation points $N_T^\varepsilon$ on the fixed interval $[0,T]$ satisfies:
\begin{equation}\label{eq:thm:1}
\lim_{\varepsilon\to 0}\varepsilon^2\,\mathbb{E}[N^\varepsilon_T] =\frac{\delta T}{e}\Big(\frac{\nu+2}{\nu+1}\Big)^{\nu+2}. 
\end{equation}
Moreover the following CLT is observed:
\[
\ds\lim_{\varepsilon \to 0}\ \frac{\sqrt{e}}{\varepsilon \sigma\sqrt{\delta T}}\ \Big(\frac{\nu+1}{\nu+2}\Big)^{3\nu/2+3}\Big[ \varepsilon^2 N^\varepsilon_T -\frac{\delta T}{e}\Big( \frac{\nu+2}{\nu+1} \Big)^{\nu+2}\Big] = G \quad \mbox{in distribution,}
\]
with $G$ a $\mathcal{N}(0,1)$ standard Gaussian random variable and 
\begin{equation}
\label{eq:thm:11}
\sigma^2=\Big( \frac{\nu+1}{\nu+3} \Big)^{\nu+2}-\Big(\frac{\nu+1}{\nu+2}\Big)^{2\nu+4}.
\end{equation}
\end{thm}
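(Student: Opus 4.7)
My plan is to realise the integer-dimensional Bessel process as the Euclidean norm of a $\delta$-dimensional Brownian motion $W$, to build directly on $W$ the sequence of spheroid exit times and exit positions that matches the Bessel skeleton, and then to read off the a.s.\ bound and the cost/CLT statements from elementary tools.

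First, I would set $Z^{\delta,y}_t=\|W_t\|$ with $W=(W^{(1)},\dots,W^{(\delta)})$ started at $ye_1:=(y,0,\dots,0)$ and inductively define
\[
\tau_n:=\inf\bigl\{t>\tau_{n-1}:\|W_t-W_{\tau_{n-1}}\|=\phi_{\delta,\varepsilon}(t-\tau_{n-1})\bigr\},\qquad u_n:=\tau_n-\tau_{n-1}.
\]
The strong Markov property applied at $\tau_{n-1}$ together with the rotational invariance of $W$ make the $u_n$ i.i.d.\ and force, conditional on $u_n$, the increment $W_{\tau_n}-W_{\tau_{n-1}}$ to be uniform on the sphere of radius $\phi_{\delta,\varepsilon}(u_n)$. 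Rotating coordinates so that $W_{\tau_{n-1}}=\|W_{\tau_{n-1}}\|e_1$, one has $W_{\tau_n}=\|W_{\tau_{n-1}}\|e_1+\phi_{\delta,\varepsilon}(u_n)V_n$ with $V_n$ i.i.d.\ uniform on $\mathcal{S}^\delta$, and expanding $\|W_{\tau_n}\|^2$ produces exactly the recursion defining $y_n^\varepsilon$.

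Second, I would identify the law of $u_1$ as $(\varepsilon^2/\delta)\,e^{1-A}$ with $A\sim\Gamma(\nu+2,1/(\nu+1))$: a short computation shows that $\{\|x\|=\phi_{\delta,\varepsilon}(t)\}$ is precisely a level set of the $\delta$-dimensional heat kernel, so the exit time of $W$ from the corresponding time-space spheroid can be computed by the same heat-kernel argument as in \cite{deaconu2020strong}, now applied in dimension $\delta$. This is the main non-trivial input and the step I expect to require the most care; granting it, $(\tau_n,\|W_{\tau_n}\|)_{n\ge 0}$ and $(s_n^\varepsilon,y_n^\varepsilon)_{n\ge 0}$ share the same joint law, and hence the piecewise-constant process $x_t^\varepsilon:=\sum_n\|W_{\tau_n}\|\mathbf{1}_{[\tau_n,\tau_{n+1})}(t)$ is equidistributed with $(y_t^\varepsilon)$. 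Moreover, for $t\in[\tau_{n-1},\tau_n)$ the construction forces $\|W_t-W_{\tau_{n-1}}\|\le\max_{I_{\delta,\varepsilon}}\phi_{\delta,\varepsilon}=\varepsilon$, so the reverse triangle inequality yields $|Z_t^{\delta,y}-\|W_{\tau_{n-1}}\||\le\varepsilon$ and $(y_t^\varepsilon)$ is $\varepsilon$-strong in the sense of Definition \ref{def}.

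For the cost and the CLT I would pass to the rescaled i.i.d.\ variables $U_k:=u_k^\varepsilon/\varepsilon^2=(e/\delta)e^{-A_k}$, whose law does not depend on $\varepsilon$, so that $N_T^\varepsilon=\inf\{n:\sum_{k=1}^nU_k\ge T/\varepsilon^2\}$. The Laplace transform of $\Gamma(\nu+2,1/(\nu+1))$ gives
\[
\mu_U=\frac{e}{\delta}\Big(\frac{\nu+1}{\nu+2}\Big)^{\nu+2},\qquad \sigma_U^2=\Big(\frac{e}{\delta}\Big)^2\sigma^2,
\]
with $\sigma^2$ as in \eqref{eq:thm:11}. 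The elementary renewal theorem applied with $M:=T/\varepsilon^2\to\infty$ yields $\varepsilon^2\mathbb{E}[N_T^\varepsilon]\to T/\mu_U$, which is \eqref{eq:thm:1}; Anscombe's renewal CLT gives the convergence in distribution of $(N_T^\varepsilon-M/\mu_U)\mu_U^{3/2}/(\sigma_U M^{1/2})$ to $\mathcal{N}(0,1)$, and multiplying through by $\varepsilon^2$ and substituting the explicit values of $\mu_U$ and $\sigma_U$ produces exactly the normalising factor $\sqrt{e}/(\varepsilon\sigma\sqrt{\delta T})\,((\nu+1)/(\nu+2))^{3\nu/2+3}$ stated in the theorem.
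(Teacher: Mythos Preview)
Your proposal is correct and follows essentially the same route as the paper: represent the Bessel process as the norm of a $\delta$-dimensional Brownian motion started at $ye_1$, build the skeleton from successive spheroid exits, use rotational invariance plus the reverse triangle inequality for the $\varepsilon$-bound, and apply the elementary renewal theorem and the renewal CLT to the i.i.d.\ rescaled increments $U_k=(e/\delta)e^{-A_k}$. The only difference is cosmetic: the paper cites \cite{Deaconu-Herrmann-2013} for the law of the spheroid exit time, whereas you sketch the heat-kernel argument yourself; otherwise the three steps and the computations (including the Laplace-transform evaluation of $\mu_U$, $\sigma_U^2$ and the resulting normalising constant) coincide.
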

It is important to notice that Theorem \ref{thm:Bess1} leads to confidence intervals for the number of approximated points which represents the efficiency of the approximation algorithm. 
\begin{proof} We construct the proof in several steps.\\[5pt]
\emph{Step 1: $\varepsilon$-strong approximation.}\\[5pt]
Let us consider the $\delta$-dimensional Bessel process $(Z^{\delta,y}_t,\, t\ge 0)$ starting in $y$. We introduce the vector $\hat{y}=(y,0,\ldots,0)\in\mathbb{R}^\delta$. It is well-known that $(Z^{\delta,y}_t,\, t\ge 0)$ has the same distribution as $(\Vert y+W_t \Vert,\, t\ge 0)$ where $W$ stands for a standard $\delta$-dimensional Brownian motion. It suffices therefore to strongly approximate the Brownian norm since the strong approximation is based on an identity in law.\\
Let us now build a sequence of points $(t_n,z_n)_{n\ge 0}$ belonging to the trajectory of the $\delta$-dimensional Brownian motion $(t,\hat{y}+W_t)_{t\ge 0}$ and satisfying $t_{n+1}\ge t_n$ for any $n\ge 0$. The sequence of times is defined by 
\begin{equation}
\label{eq:def:times}
t_{n+1}:=\inf\{t> t_{n}:\, \Vert W_t -W_{t_n}\Vert =\phi_{\delta,\varepsilon}(t-t_n) \}\quad\mbox{and}\quad t_0=0.
\end{equation}
These times represent the successive exit times of spheroids sequence (also called heat balls) whose boundary shape corresponds to the function $\phi_{\delta,\varepsilon}$ defined by \eqref{def:phi}. In order to observe points belonging to the path, we set $z_n:=\hat{y}+W_{t_n}$. Due to the definition of the stopping times and since the maximum of the function $\phi_{\delta,\varepsilon}$ equals $\varepsilon$, we get
\[
\Vert \hat{y}+W_t-z_n\Vert \le \varepsilon,\quad \forall t\in [ t_n, t_{n+1}],\quad \forall n\ge 0.
\]
This means that, if we denote by $(z_t)_{t\ge 0}$ the random function satisfying
\[
z_t:=\sum_{n\ge 0}z_n1_{\{ t_n\le t<t_{n+1} \}},\quad t\ge 0,
\]
then we have that $\Vert \hat{y}+W_t-z_t\Vert \le \varepsilon$, for all $t\ge 0$, almost surely. Moreover $\Vert \hat{y}+W_t-z_t\Vert=0$ as soon as $t=t_n$ and $n\ge 0$. This approximation of the $\delta$-dimensional Brownian trajectory obviously allows to approximate its Euclidean norm. The second triangle inequality leads to
\[
|\Vert \hat{y}+W_t\Vert-\Vert z_t\Vert|\le \Vert \hat{y}+W_t-z_t\Vert \le \varepsilon,\quad \forall t\ge 0.
\]
Since the strong approximation is based on an identity in distribution, $(\Vert z_t \Vert)_{t\ge 0}$ is an $\varepsilon$-strong approximation of the Bessel path.\\[5pt] 
\noindent\emph{Step 2: Relation to the Bessel skeleton ${\rm (BeS)}_{\delta}$.}\\[5pt]
To construct a typical approximated trajectory, it suffices to generate the sequence of successive times $(t_n)_{n\ge 0}$ and the associated sequence $(z_n)_{n\ge 0}=(W_{t_n})_{n\ge 0}$. It corresponds in fact to the sequences of exit times and exit locations of spheroids. In \cite{Deaconu-Herrmann-2013}, the authors described the distribution of these two sequences. We note that
\begin{itemize}
\item the random variables $(t_{n+1}-t_n)_{n\ge 0}$ are independent and identically distributed. Moreover $\delta (t_{n+1}-t_n)/(e\varepsilon^2)$ has the same distribution as $e^{-A}$ where $A\sim {\rm Gamma}(\nu+2,1/(\nu+1))$.
\item the $\delta$-dimensional Brownian motion satisfies the rotational invariance property. Therefore $z_0=\hat{y}$ and $z_{n+1}$ is uniformly distributed on the sphere of center $z_n$ and radius $\phi_{\delta,\varepsilon}(t_{n+1}-t_n)$. Consequently 
\[
\Vert z_{n+1}\Vert^2=\Vert z_{n}\Vert^2+2\pi_1(V)z_n\phi_{\delta,\varepsilon}(t_{n+1}-t_n)+\phi_{\delta,\varepsilon}^2(t_{n+1}-t_n),
\]
where $V$ is uniformly distributed on the unitary sphere of dimension $\delta$ and $\pi_1$ stands for the projection on the first coordinate.
\end{itemize}
We deduce that $(t_n,\Vert z_n\Vert)_{n\ge 0}$ and the Bessel skeleton (BeS) $(s_n^\varepsilon,y_n^\varepsilon)_{n\ge 0}$  are identically distributed and consequently, the process $(y^\varepsilon_t)_{t\ge 0}$ defined in the statement of Theorem \ref{thm:Bess1} defines an $\varepsilon$-strong approximation of the Bessel process. \\[5pt]
\noindent\emph{Step 3: Number of points needed to cover $[0,T]$.}\\[5pt]
Let us now focus our attention on the number of spheroids used until a fixed time $T$, defined by $N^\varepsilon_T:=\inf\{n\ge 0:\ t_n\ge T\}$. The arguments used here are similar to those developed in \cite{deaconu2020strong} (Proposition 2.2). We denote by $(\hat{N}_t)_{t\ge 0}$ the Poisson process with independent and identically distributed arrivals $(e^{1-A_n})_{n\ge 1}$, defined by the Bessel skeleton. Then the classical asymptotic result holds
\begin{equation}\label{eq:limithm}
\lim_{t\to\infty}\frac{\mathbb{E}[\hat{N}_t]}{t}=\frac{1}{\mathbb{E}[e^{1-A_1}]}=\frac{1}{e\mathcal{L}_{A_1}(1)}=e^{-1}\Big(\frac{\nu+2}{\nu+1}\Big)^{\nu+2}.
\end{equation}
Here $\mathcal{L}_{A_1}(s)$ stands for the Laplace transform of the variate $A_1\sim {\rm Gamma}(\nu+2,1/(\nu+1))$, that is
\[
\mathcal{L}_{A_1}(s)=\Big(\frac{s}{\nu+1}+1\Big)^{-\nu-2},\quad \forall s\ge 0.
\]
Furthermore, the central limit theorem holds: if we denote by $\mu=\mathbb{E}[e^{1-A_1}]= e\Big(\frac{\nu+1}{\nu+2}\Big)^{\nu+2}$ and use the parameter $\sigma$ defined in \eqref{eq:thm:11}, then \({\rm Var}(e^{1-A_1})=e^2\sigma^2\) and 
\begin{equation}\label{eq:CLT}
\lim_{t\to\infty}\sqrt{\frac{t\mu^3}{e^2\sigma^2}}\Big(\frac{\hat{N}_t}{t}-\frac{1}{\mu}\Big)=G,\quad\mbox{in distribution,}
\end{equation}
where $G$ is a $\mathcal{N}(0,1)$ standard Gaussian random variable. These two asymptotic results described in \eqref{eq:limithm} and \eqref{eq:CLT} are related to the behaviour of $N_T^\varepsilon$ in the $\varepsilon$ small limit since   
\[
N_T^\varepsilon=\hat{N}_{\frac{T\delta}{\varepsilon^2}},\quad \forall \varepsilon>0.
\]
The announced statement is therefore an easy consequence of the previous identity.
\end{proof}
\section{Bessel processes with non integer dimension}\label{sec:noninteger}
In the previous section, it was crucial that the dimension $\delta$ of the Bessel process was an integer: this allows in particular to associate the Bessel paths with the norm of the $\delta$-dimensional Brownian motion. In the general case the dimension of the Bessel process defined in \eqref{eq:def:Be} is just a real valued parameter so we also need to develop an $\varepsilon$-strong approximation procedure for noninteger dimensions. In the particular case: $\delta\in[1,\infty[\setminus\mathbb{N}$, the crucial tool is the argument developed by Shiga and Watanabe \cite{Shiga} and already used for simulation purposes in \cite{Deaconu-Herrmann-2017}. The Bessel process of dimension $\delta$ starting in $y\ge 0$ has the same distribution as the sum of two \emph{independent} processes:
\begin{itemize}
\item a Bessel process of dimension $\delta_i:=\lfloor \delta \rfloor$ (integer dimension) starting in $y$ (the corresponding index is denoted $\nu_i$),
\item a Bessel process of dimension $\delta_f:=\delta-\lfloor \delta \rfloor$ (fractional dimension) starting in $0$ (the corresponding index is denoted $\nu_f$).
\end{itemize}
A wise combination of the construction developed in Section \ref{sec:integer} on one hand, and the identity of Shiga-Watanabe on the other hand, allows to develop an adapted procedure in the general framework. 
\subsection*{A rejection sampling algorithm}
Before defining the general Bessel skeleton, we need to introduce the generation of a particular family of random variables already mentioned in \cite{Deaconu-Herrmann-2017}. The probability distribution under consideration is deeply related to the Bessel process of dimension $\delta$ exiting from a spheroid of size $\varepsilon$. Nevertheless, we prefer to use in this paragraph generic constants $\alpha>0$ and $\beta>0$ for notational simplicity. In the sequel we are going to fix $\alpha=\delta/2$ and $\beta=2e\varepsilon^2/\delta$. Let us introduce the function $u$ defined by
\begin{equation}
\label{eq:defdeu}
u_{\alpha,\beta}(t,x)= \frac{1}{t^\alpha}\,\exp\Big\{-\frac{x^2}{t}\Big\}-\frac{1}{\beta^\alpha} 
,\quad \forall(t,x)\in\mathbb{R}_+^*\times \mathbb{R}_+^*,
\end{equation}
and the associated normalization constant $\kappa_{\alpha,\beta}$:
\begin{equation}
\label{eq:defconst}
\kappa_{\alpha,\beta,t}:=\int_0^{\rho_{\alpha,\beta,t}}u_{\alpha,\beta}(t,y)\,y^{2\alpha-1}\,\dint y\quad\mbox{with}\ \rho_{\alpha,\beta,t}:=\sqrt{\alpha t \ln\Big(\frac{\beta}{t}\Big)}.
\end{equation}
The constant $\rho_{\alpha,\beta,t}$ corresponds to the positive zero of the function $x\mapsto u_{\alpha,\beta}(t,x)$. We deduce therefore that 
\begin{equation}
x\mapsto \kappa^{-1}_{\alpha,\beta,t}\,u_{\alpha,\beta}(t,x)x^{2\alpha-1}\,1_{[0,\rho_{\alpha,\beta,t}]}(x)
\label{eq:dens}
\end{equation}
is a probability distribution function. A random variable whose density is given by \eqref{eq:dens} can be generated using the following rejection sampling. 
\begin{framed}
\centerline{\sc Conditional distribution $({\rm CD})_{\alpha,\beta}^t$}

\vspace*{0.2cm}
\begin{enumerate}
\item Let $(R_n)_{n\ge 1}$  be a sequence of uniformly distributed i.i.d. random variables on the interval $[0,1]$. 
\item Let $(V_n)_{n\ge 1}$ be another sequence of i.i.d. uniformly distributed random variables on $[0,1]$. The sequences 
 $(R_n)_{n\ge 1}$ and $(V_n)_{n\ge 1}$ are assumed to be independent. 
\end{enumerate}
{\bf Initialization:} $n=1$.\\[5pt]
{\bf While} \( u_{\alpha,\beta}(t,0)\, R_n> u_{\alpha,\beta}(t,\rho_{\alpha,\beta,t}\,V_n^{1/(2\alpha)})\) {\bf set} $n\leftarrow n+1$;\\[5pt]
{\bf Outcome:} $\mathcal{Z}=\rho_{\alpha,\beta,t}\,V_n^{1/(2\alpha)}$.
\end{framed}
This algorithm is of prime importance in the study of Bessel processes. Indeed let us consider a Bessel process $(Z^{\delta,0}_t)_{t\ge 0}$ starting in $0$ and with dimension $\delta>0$ and let us denote $\tau_\phi$ the first passage time through the curved boundary given by \eqref{def:phi}. We omit the dependence with respect to the parameters $\delta$ and $\varepsilon$ for notational simplicity. The following identity in distribution holds.
\begin{lemma} Let $0<t<r_{\delta,\varepsilon}=e\varepsilon^2/\delta$. The outcome of Algorithm $({\rm CD})_{\alpha,\beta}^{2t}$, with $\alpha=\delta/2$ and $\beta=2e\varepsilon^2/\delta$, has the same distribution as the conditional distribution of $Z_t^{\delta,0}$ given $\tau_\phi>t$.
\label{lem:identlaw}
\end{lemma}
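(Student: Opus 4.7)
The plan is to identify the probability density sampled by Algorithm $({\rm CD})_{\alpha,\beta}^{2t}$ and to match it with the sub-probability density of $Z^{\delta,0}$ absorbed on the boundary $\phi_{\delta,\varepsilon}$. With the choices $\alpha=\delta/2$ and $\beta=2e\varepsilon^2/\delta$, a direct substitution yields $\rho_{\alpha,\beta,2t}=\phi_{\delta,\varepsilon}(t)$, and a standard analysis of the rejection mechanism then shows that the output $\mathcal{Z}$ admits the density
\[
f_{\mathcal{Z}}(x)=\kappa^{-1}_{\alpha,\beta,2t}\left(\frac{1}{(2t)^{\delta/2}}e^{-x^2/(2t)}-\frac{1}{\beta^{\delta/2}}\right)x^{\delta-1}\,1_{[0,\phi_{\delta,\varepsilon}(t)]}(x).
\]

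On the probabilistic side, the conditional law of $Z_t^{\delta,0}$ given $\{\tau_\phi>t\}$ has density proportional to the absorbed sub-density $q(t,x)$ defined by $\mathbb{P}(Z_t^{\delta,0}\in\dd x,\,\tau_\phi>t)=q(t,x)\,\dd x$. This function solves the Bessel Fokker--Planck problem
\[
\partial_t q=\tfrac{1}{2}\partial_x^2 q-\tfrac{\delta-1}{2}\partial_x(q/x),\qquad q(t,\phi_{\delta,\varepsilon}(t))=0,\qquad q(0,\cdot)=\delta_0.
\]
I would then guess the explicit candidate $\tilde q(t,x)=C\bigl[(2t)^{-\delta/2}e^{-x^2/(2t)}-\beta^{-\delta/2}\bigr]x^{\delta-1}$, which coincides with $f_{\mathcal{Z}}$ up to a multiplicative constant, and verify the three requirements in turn. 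First, both summands are killed by the parabolic operator $\partial_t-\mathcal{L}^*$: the term $x^{\delta-1}t^{-\delta/2}e^{-x^2/(2t)}$ is (up to a constant) the unconditional Bessel density starting at $0$, while $x^{\delta-1}$ is a stationary solution, as a short computation of the adjoint Bessel operator reveals. Second, the Dirichlet condition at $x=\phi_{\delta,\varepsilon}(t)$ reduces to the identity $e^{-\phi_{\delta,\varepsilon}(t)^2/(2t)}=(2t/\beta)^{\delta/2}$, which is immediate from the definition of $\phi_{\delta,\varepsilon}$. Third, as $t\to 0^+$, the first summand concentrates at $0$ while the second, being bounded on the shrinking interval $[0,\phi_{\delta,\varepsilon}(t)]$, contributes no mass in the limit. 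Uniqueness for this linear parabolic problem with a time-dependent domain then identifies $q$ with $\tilde q$, and normalization yields $f_{\mathcal{Z}}=q/\mathbb{P}(\tau_\phi>t)$, which is the announced identity in distribution.

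The main subtlety lies in the verification of the initial condition: although the first summand is a Bessel density, it is singular at $t=0$, and one must ensure that subtracting the constant-in-$t$ second summand does not spoil the convergence to the Dirac mass at $0$. This is handled by pairing $\tilde q(t,\cdot)$ against a test function and exploiting $\phi_{\delta,\varepsilon}(t)/\sqrt{t}\to\infty$ as $t\to 0^+$, which guarantees that the Gaussian term retains its full unit mass on $[0,\phi_{\delta,\varepsilon}(t)]$ while the bounded constant term integrates to zero on this shrinking interval.
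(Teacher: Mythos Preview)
Your computation of the rejection-sampling density matches the paper's exactly; where you diverge is in identifying the conditional law of $Z_t^{\delta,0}$ given $\tau_\phi>t$. The paper does not derive this density at all: after writing down the output density of $({\rm CD})_{\alpha,\beta}^{2t}$ it simply cites \cite{Deaconu-Herrmann-2017} for the fact that this is the absorbed Bessel density. You instead reconstruct that density from scratch by verifying that the candidate solves the Fokker--Planck equation on the moving domain $[0,\phi_{\delta,\varepsilon}(t)]$ with Dirichlet boundary and Dirac initial data, and then invoke uniqueness. This is a genuinely different and more self-contained route; it explains \emph{why} the heat-ball boundary $\phi_{\delta,\varepsilon}$ produces such a clean closed form (the shape is precisely what makes the constant-in-$t$ term $x^{\delta-1}$ vanish on the boundary), whereas the paper treats this as an imported fact.

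The cost of your approach is the uniqueness step, which you state but do not justify. The equation has a singular drift at $x=0$, the domain is time-dependent, and in the fractional regime $\delta\in(0,1)$ (which is exactly how the lemma is used later, with $\delta_f<1$) your candidate $\tilde q(t,x)\sim C\,x^{\delta-1}$ is itself unbounded near the origin, so one cannot appeal to a standard maximum principle without specifying a boundary condition or growth restriction at $x=0$. None of this is fatal---uniqueness does hold, and can be obtained for instance by a probabilistic argument (Feynman--Kac for the killed process) or by working in the squared-Bessel coordinates where the generator is non-singular---but it is the genuinely delicate point of your argument, more so than the initial-condition check you flag. The paper sidesteps all of this by citation.
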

\begin{proof} Since Algorithm $({\rm CD})_{\alpha,\beta}^t$ is an acceptance-rejection sampling, we can easily describe the distribution of its outcome. Let $\psi$ be any non negative measurable function. We consider $R$ and $V$ two independent uniformly distributed r.v., then
\begin{align}
\mathbb{E}[\psi(\mathcal{Z})]=\frac{\nu(\psi)}{\nu(1)}\quad\mbox{where}\
\nu(\psi):=\mathbb{E}\Big[\psi(\rho_{\alpha,\beta,t}\,V^{1/(2\alpha)}) 1_{\{ u_{\alpha,\beta}(t,0)\, R\le  u_{\alpha,\beta}(t,\rho_{\alpha,\beta,t}\,V^{1/(2\alpha)}) \}}\Big].
\label{eq:reject}
\end{align}
Using the change of variables $y=\rho_{\alpha,\beta,t}\,x^{1/(2\alpha)}$ permits to obtain
\begin{align}
\nu(\psi)&=\int_0^1\psi(\rho_{\alpha,\beta,t}\,x^{1/(2\alpha)})\,\frac{u_{\alpha,\beta}(t,\rho_{\alpha,\beta,t}\,x^{1/(2\alpha)})}{u_{\alpha,\beta}(t,0)}\,\dint x\nonumber\\
&=\frac{2\alpha}{\rho_{\alpha,\beta,t}^{2\alpha}}\int_0^{\rho_{\alpha,\beta,t}}\psi(y)\,\frac{u_{\alpha,\beta}(t,y)}{u_{\alpha,\beta}(t,0)}\,y^{2\alpha-1}\dint y.\label{eq:dev}
\end{align}
Combining \eqref{eq:reject} and \eqref{eq:dev} proves that the p.d.f. of the random variable $\mathcal{Z}\sim({\rm CD})_{\alpha,\beta}^t$ corresponds to the function introduced in \eqref{eq:dens}. After setting $\alpha=\delta/2$ and $\beta=2e\varepsilon^2/\delta$, we deduce that the density of  $\mathcal{Z}\sim({\rm CD})_{\alpha,\beta}^{2t}$ corresponds to the function
\[
x\mapsto \left(\frac{2}{(2t)^{\delta/2}\Gamma(\delta/2)}\exp\Big\{ -\frac{x^2}{2t} \Big\}-\frac{2}{\Gamma(\delta/2)}\Big(\frac{\delta}{2e\varepsilon^2}\Big)^{\delta/2}\right)\, x^{\delta-1}1_{\{0<x<\phi_{\delta,\varepsilon}(t)\}}
\]
which is exactly the conditional density of $Z_t^{\delta,0}$, given $\tau_\phi>t$ (see, for instance, \cite{Deaconu-Herrmann-2017}).
\end{proof}
\begin{rem}
\label{rem:efficiency}
The algorithm $({\rm CD})^t_{\alpha,\beta}$ is based on a rejection sampling method, it is therefore straightforward to describe the efficiency of the procedure. It is well known that the number of trials corresponds to a geometrically distributed random variable denoted by $N$, with parameter $\nu(1)$, $\nu$ being defined in \eqref{eq:reject}. We deduce from \eqref{eq:dev} that
\begin{align*}
\mathbb{E}[N]=\frac{1}{\nu(1)}=\frac{\rho_{\alpha,\beta,t}^{2\alpha}}{2\alpha}\frac{u_{\alpha,\beta}(t,0)}{\kappa_{\alpha,\beta,t}}.
\end{align*}
An integration by parts allows to compute the value of the constant $\kappa_{\alpha,\beta,t}$, by introducing the incomplete Gamma function:
\[
\kappa_{\alpha,\beta,t}=\frac{1}{2\alpha}\,\gamma\Big(\alpha+1,\alpha\ln\frac{\beta}{t}\Big),\quad\mbox{where}\quad \gamma(a,x)=\int_0^x y^{a-1}e^{-y}\,\dint y.
\]
Finally the average number of steps equals
\[
\mathbb{E}[N]=\frac{\alpha^\alpha}{\gamma\Big(\alpha+1,\alpha\ln\frac{\beta}{t}\Big)}\ \Big(\ln\frac{\beta}{t}\Big)^\alpha\ \Big(1 - \frac{t^\alpha}{\beta^\alpha}  \Big),\quad \mbox{for}\ t<\beta.    
\]
\end{rem}
\begin{Calculus} Let us do this calculus by using an integration by parts:
\[
\begin{array}{ll}
\kappa_{\alpha,\beta,t}& = \ds\int_0^{\rho_{\alpha,\beta,t}} u_{\alpha,\beta}(t,y)y^{2\alpha-1}\dint y\\
&=\ds\int_0^{\rho_{\alpha,\beta,t}} u_{\alpha,\beta}(t,y)\left(\frac{y^{2\alpha}}{2\alpha}\right)'\dint y\\
&= \left[\ds\frac{y^{2\alpha}}{2\alpha}u_{\alpha, \beta}(t,y)\right]_0^{\rho_{\alpha,\beta,t}} - \ds\int_0^{\rho_{\alpha,\beta,t}} \ds\frac{\partial u_{\alpha,\beta}}{\partial y} (t,y) \ds\frac{y^{2\alpha}}{2\alpha}\dint y.
\end{array}
\]

The first term on the last expression is 0 by the definition of $\rho_{\alpha,\beta,t}$. For the second one, by using the expression of $u_{\alpha,\beta}(t,y)$ and by performing the change of variable $\frac{y^2}{t}=x$ we get
\[
\begin{array}{ll}
\kappa_{\alpha,\beta,t}& = - \ds\int_0^{\rho_{\alpha,\beta,t}} \ds\frac{\partial u_{\alpha,\beta}}{\partial y} (t,y) \ds\frac{y^{2\alpha}}{2\alpha}\dint y\\
&=\ds\int_0^{\rho_{\alpha,\beta,t}}\ds\frac{1}{t^\alpha} \left(-\frac{2y}{t}\right)\exp\left(-\frac{y^2}{t}\right) \ds\frac{y^{2\alpha}}{2\alpha}\dint y\\
&= \ds\int_0^{\alpha \ln(\frac{\beta}{t})} \frac{1}{2\alpha} x^\alpha e^{-x}\dint x\\ 
&=
\ds\frac{1}{2\alpha}\,\gamma\Big(\alpha+1,\alpha\ln\frac{\beta}{t}\Big),\quad\mbox{where}\quad \gamma(a,x)=\int_0^x y^{a-1}e^{-y}\,\dint y.
\end{array}
\]
\end{Calculus}

\mathversion{bold}
\subsection*{The Bessel skeleton (non integer dimension $\delta>1$)}
\mathversion{normal}
 As already mentioned, our approach for the general case is based on Shiga-Watanabe's identity in order to split the simulation challenge into two parts: a Bessel process of integer dimension on one hand and a Bessel process of dimension less than $1$ on the other hand. In the sequel, for an easy identification of these two parts, we shall use for most of the parameters either the index $i$ corresponding to the integer part or the index $f$ for the fractional one. 

Let us fix two parameters $w_i\in]0,1[$ and $w_f\in]0,1[$ satisfying the following relation
\begin{equation}
\label{eq:relw}
w_f+2\sqrt{w_i}=1.
\end{equation}
Let us also define the general Bessel skeleton for a non integer dimension $\delta>1$. We need to introduce the following constants:
\[
\alpha_i:=\delta_i/2,\quad \alpha_f:=\delta_f/2, \quad \beta_i:=2ew_i\varepsilon^2/\delta_i
 \quad \mbox{and}\quad \beta_f:=2ew_f\varepsilon^2/\delta_f.
\]
We approximate a Bessel path, with starting value $y$, by constucting the following algorithm.

\begin{framed}
\centerline{\sc Bessel Skeleton $({\rm BeS})_\delta^w$ -- non integer dimension $\delta>1$}

\vspace*{0.2cm}
\begin{enumerate}
\item Let $(\ai_n)_{n\ge 1}$  be a sequence of i.i.d. random variables with gamma distribution ${\rm Gamma}(\nu_i+2,1/(\nu_i+1))$.
\item Let $(\af_n)_{n\ge 1}$  be a sequence of i.i.d. random variables with gamma distribution ${\rm Gamma}(\nu_f+2,1/(\nu_f+1))$.
\item Let $(V_n)_{n\ge 1}$ be a sequence of i.i.d. uniformly distributed random vectors on the boundary of the unitary sphere $\mathcal{S}^{\delta_i}$. We denote by $\pi_1(V_n)$ the projection on
the first coordinate. 
\end{enumerate}
The sequences 
 $(\ai_n)_{n\ge 1}$, $(\af_n)_{n\ge 1}$ and $(V_n)_{n\ge 1}$ are assumed to be independent. \\[5pt]
{\bf Initialization:} $n=0$, $y^\varepsilon_n=y$, $u_n^\varepsilon=0$, $s_n^\varepsilon=0$.\\[5pt]
{\bf Step 1.} Set $n\leftarrow n+1$.\\[5pt]
{\bf Step 2.} {\bf If} $\af_n-\ai_n<\ln\Big(\frac{w_f}{w_i}\Big)+\ln\frac{\delta_i}{\delta_f}$ {\bf then}\\[5pt]
\hspace*{2cm}\begin{minipage}{12cm}\begin{itemize}
\item Set $u_n^\varepsilon=\frac{\varepsilon^2w_i}{\delta_i}\,e^{1-\ai_n}$ and $\mathcal{Y}=\phi_{\delta_i,\varepsilon\sqrt{w_i}}(u_n^\varepsilon)$ 
\item Generate $\mathcal{Z}\sim ({\rm CD})^{2u_n^\varepsilon}_{\alpha_f,\beta_f}$ 
\end{itemize}\end{minipage}\\[5pt]
\hspace*{1.7cm}{\bf else} \\[5pt]
\hspace*{2cm}\begin{minipage}{12cm}\begin{itemize}
\item Set $u_n^\varepsilon=\frac{\varepsilon^2w_f}{\delta_f}\,e^{1-\af_n}$ and $\mathcal{Z}=\phi_{\delta_f,\varepsilon\sqrt{w_f}}(u_n^\varepsilon)$
\item Generate $\mathcal{Y}\sim ({\rm CD})^{2u_n^\varepsilon}_{\alpha_i,\beta_i}$
\end{itemize}\end{minipage}\\[12pt]
{\bf Step 3.} Set $y_n^\varepsilon=\Big((y_{n-1}^\varepsilon)^2+2y_{n-1}^\varepsilon\pi_1(V_n)\,\mathcal{Y}+\mathcal{Y}^2+\mathcal{Z}^2\Big)^{1/2}$ and $s_n^\varepsilon=s_{n-1}^\varepsilon+u_n^\varepsilon$.\\[5pt]
\hspace*{1.5cm} Return to Step 1.\\[5pt]
\noindent
{\bf  Definition:} The \emph{Bessel skeleton} $({\rm BeS})_\delta^w$ corresponds to $\Big((u_n^\varepsilon,s_n^\varepsilon)_{n\ge 1},(y_n^\varepsilon)_{n\ge 0}\Big)$.
\end{framed}

The algorithm is based on the construction of a sequence of points $(s_n^\varepsilon,y_n^\varepsilon)_{n\ge 0}$ which essentially permit to emphasize an approximated Bessel path. This sequence is obtained in a Markovian step by step procedure. With a starting time and location $(s_n^\varepsilon,y_n^\varepsilon)$, corresponding to the value of the $\delta$-dimensional Bessel process, we associate two sets composed of a starting point and a spheroid: one intended for a Bessel process of integer dimension and the other one for a Bessel process of fractional dimension. These two paths have been carefully observed until one of them exits from its spheroid. At that random time $s_{n+1}$, both paths are stopped and a combination of their position at that stage permits to compute $y_{n+1}$. To sum up, each step of the algorithm starts with a splitting of the Bessel paths and ends up with a regluing procedure. The sequence  $(s_n^\varepsilon,y_n^\varepsilon)_{n\ge 0}$ is crucial for the path approximation as pointed out in the following statement. 

\begin{thm}\label{thm:Bessne}
Let $\varepsilon>0$ and let $w=(w_i,w_f)\in]0,1[^2$ be a couple of weights satisfying the condition \eqref{eq:relw}. Consider a Bessel skeleton $({\rm BeS})_{\delta}^w$ with a non integer dimension $\delta>1$. Then $y^\varepsilon_t=\sum_{n\ge 0}y_n^\varepsilon1_{\{  s_n^\varepsilon \le t<s^\varepsilon_{n+1}\}}$ is an $\varepsilon$-strong approximation of the Bessel paths starting in $y$, solution of \eqref{eq:def:Be}. 
The number of approximation points $N_T^\varepsilon$ on the fixed interval $[0,T]$ satisfies:
\begin{equation}\label{eq:thm:2}
\lim_{\varepsilon\to 0}\varepsilon^2\,\mathbb{E}[N^\varepsilon_T]=\frac{T}{\mu}:=\frac{T\delta_i}{ew_i}\ \mathcal{F}\Big(\frac{w_f}{w_i}\frac{\delta_i}{\delta_f},\nu_f+2,\frac{1}{\nu_f+1},\nu_i+2,\frac{1}{\nu_i+1}\Big)^{-1},
\end{equation}
where $\mathcal{F}(x,a,\lambda,b,\mu)=\mathbb{E}[\min(x e^{-A},e^{-B})]$. Here $A$ and $B$ stand for two independent Gamma distributed random variables with parameters (shape $a$ and scale $\lambda$) and, respectively $(b,\mu)$. 
Moreover the following CLT is observed:
\[
\ds\lim_{\varepsilon \to 0}\frac{1}{\varepsilon\sqrt
T}\frac{\mu^{3/2}\delta_i}{ew_i\sigma}\Big(  \varepsilon^2\, N^\varepsilon_T-\frac{T}{\mu}\Big) = G \quad \mbox{in distribution},
\]
with $G$ a $\mathcal{N}(0,1)$ standard Gaussian random variable and 
\begin{equation}
\label{def:sigma2}
\sigma^2=\mathcal{F}\Big(\frac{w_f^2}{w_i^2}\frac{\delta_i^2}{\delta_f^2},\nu_f+2,\frac{2}{\nu_f+1},\nu_i+2,\frac{2}{\nu_i+1}\Big)-\mathcal{F}\Big(\frac{w_f}{w_i}\frac{\delta_i}{\delta_f},\nu_f+2,\frac{1}{\nu_f+1},\nu_i+2,\frac{1}{\nu_i+1}\Big)^2.
\end{equation}
\end{thm}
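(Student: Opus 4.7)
The plan is to extend the three-step proof of Theorem \ref{thm:Bess1} by combining the strong Markov property of the Bessel process with the Shiga--Watanabe additivity identity \cite{Shiga} at each iteration. At a stage where the approximation sits at $y_n^\varepsilon$, the restarted $\delta$-dimensional Bessel process is distributed as
\[
Z^{\delta,y_n^\varepsilon}_t \stackrel{d}{=} \sqrt{\|\hat{y}_n^\varepsilon + W_t\|^2 + (Z^{\delta_f,0}_t)^2},
\]
with $\hat{y}_n^\varepsilon=(y_n^\varepsilon,0,\dots,0)\in\mathbb{R}^{\delta_i}$, $W$ a fresh $\delta_i$-dimensional Brownian motion and $Z^{\delta_f,0}$ an independent Bessel process of dimension $\delta_f$ starting from $0$. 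Each step of the algorithm is then to be read as the observation of these two independent processes until one of them exits its respective spheroid, of size $\varepsilon\sqrt{w_i}$ and $\varepsilon\sqrt{w_f}$.

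I would then introduce the two independent exit times $\tau^i := \inf\{t>0:\|W_t\|=\phi_{\delta_i,\varepsilon\sqrt{w_i}}(t)\}$ and $\tau^f := \inf\{t>0:Z^{\delta_f,0}_t = \phi_{\delta_f,\varepsilon\sqrt{w_f}}(t)\}$, and use the exit-time description recalled in the proof of Theorem \ref{thm:Bess1} (valid for any real dimension via the general-dimension results in \cite{Deaconu-Herrmann-2013}) to identify $\delta_i\tau^i/(ew_i\varepsilon^2)\stackrel{d}{=}e^{-\ai}$ and $\delta_f\tau^f/(ew_f\varepsilon^2)\stackrel{d}{=}e^{-\af}$. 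A direct computation shows that the comparison test in Step~2 of the algorithm is precisely $\tau^i<\tau^f$, so $u_n^\varepsilon=\min(\tau^i,\tau^f)$. On the winning side the process sits by construction on its spheroid boundary; on the losing side one needs the law of the corresponding Bessel process at time $u_n^\varepsilon$ given non-exit on $[0,u_n^\varepsilon]$, and Lemma \ref{lem:identlaw} identifies that law with the one produced by $({\rm CD})^{2u_n^\varepsilon}$. The scalar recursion of Step~3 then follows by expanding $\|\hat{y}_{n-1}^\varepsilon+W_{u_n^\varepsilon}\|^2$ and invoking rotational invariance of $W$, which replaces the direction $W_{u_n^\varepsilon}/\|W_{u_n^\varepsilon}\|$ by a uniform vector $V_n$ on the unit sphere of $\mathbb{R}^{\delta_i}$, independent of $\mathcal{Y}=\|W_{u_n^\varepsilon}\|$. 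Setting $x^\varepsilon_t:=y_n^\varepsilon$ for $t\in[s_n^\varepsilon,s_{n+1}^\varepsilon)$, and embedding the integer and fractional components as orthogonal coordinates in $\mathbb{R}^{\delta_i+1}$, the reverse triangle inequality yields
\[
|Z^{\delta,y}_t-y_n^\varepsilon|\le\sqrt{\|W_{t-s_n^\varepsilon}\|^2+(Z^{\delta_f,0}_{t-s_n^\varepsilon})^2}\le\varepsilon\sqrt{w_i+w_f},
\]
and the relation \eqref{eq:relw} gives $w_i+w_f=(1-\sqrt{w_i})^2<1$, so the error is $\le\varepsilon$ almost surely on $[0,T]$.

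For the two asymptotic statements, $(u_n^\varepsilon)_{n\ge 1}$ are i.i.d.\ with
\[
u_1^\varepsilon=\frac{ew_i\varepsilon^2}{\delta_i}\min\Big(e^{-\ai},\tfrac{w_f\delta_i}{w_i\delta_f}e^{-\af}\Big),
\]
so a direct factoring gives $\mathbb{E}[u_1^\varepsilon]=\varepsilon^2\mu$ with $\mu$ defined by \eqref{eq:thm:2}. Using $\min(a,b)^2=\min(a^2,b^2)$ for non-negative $a,b$ together with the scaling $A\sim\mathrm{Gamma}(a,\lambda)\Rightarrow 2A\sim\mathrm{Gamma}(a,2\lambda)$, one obtains $\mathrm{Var}(u_1^\varepsilon)=\varepsilon^4 e^2(w_i/\delta_i)^2\sigma^2$ with $\sigma^2$ as in \eqref{def:sigma2}. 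The elementary renewal theorem applied to the renewal process $(s_n^\varepsilon)$ and its first-passage time $N_T^\varepsilon$ above $T$ produces \eqref{eq:thm:2}, and Anscombe's renewal CLT produces the announced central limit theorem after rearrangement, exactly as in Step~3 of the proof of Theorem \ref{thm:Bess1}.

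The delicate point I anticipate is the first step, namely to secure rigorously that the Markovian construction underlying the algorithm faithfully couples with the iterated Shiga--Watanabe decomposition: that the joint law of the triple (exit time $u_n^\varepsilon$, winner's position on the spheroid boundary, loser's conditional position) matches exactly the output of the gamma variates, the indicator comparing them, and the rejection sampler $({\rm CD})^{2u_n^\varepsilon}$. Once this coupling is established, the path estimate reduces to the short norm inequality enabled by \eqref{eq:relw} and the counting statements are classical renewal theory.
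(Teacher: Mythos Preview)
Your proposal is correct and follows the same three-step architecture as the paper: iterated Shiga--Watanabe splitting at each skeleton point via the strong Markov property, identification of $(u_n^\varepsilon,\mathcal{Y},\mathcal{Z},V_n)$ with the minimum of two spheroid exit times, the winner's boundary value, the loser's conditioned position (Lemma~\ref{lem:identlaw}), and an independent uniform direction, and finally renewal theory for $N_T^\varepsilon$. The ``delicate point'' you flag is exactly what the paper handles in its Step~1 by explicitly constructing the glued process $\overline{\mathcal{X}}^{(\infty)}$ and checking in Step~2 that its skeleton has the distribution produced by the algorithm.

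One difference is worth recording: your error estimate is actually sharper than the paper's. The paper bounds $|\overline{\mathcal{X}}^{\infty}_t-\overline{\mathcal{X}}^{\infty}_{s_n}|$ through a chain of scalar inequalities ($|\sqrt{a+b}-x|\le|\sqrt{a}-x|+\sqrt{b}$, then $||a+b|-|a||\le|b|$) and arrives at $\varepsilon(\sqrt{w_i}+\sqrt{w_i+w_f})$, which equals $\varepsilon$ precisely under \eqref{eq:relw}. Your single reverse triangle inequality in $\mathbb{R}^{\delta_i+1}$ gives $\varepsilon\sqrt{w_i+w_f}=\varepsilon(1-\sqrt{w_i})<\varepsilon$. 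So your argument shows the approximation is strictly better than $\varepsilon$, and in fact that the equality constraint \eqref{eq:relw} could be weakened to $w_i+w_f\le 1$ while still obtaining an $\varepsilon$-strong approximation; this would allow larger spheroids and hence fewer skeleton points.
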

\begin{corollary}\label{cor:bessne} There exists $\varepsilon_0>0$, such that the average number of approximation points $N_T^\varepsilon$, on the fixed interval $[0,T]$, satisfies:
\begin{equation}\label{eq:upperb}
\varepsilon^2\mathbb{E}[N_T^\varepsilon]\le \frac{T}{e}\max\Big(\frac{\delta_i}{w_i},\frac{\delta_f}{w_f}\Big)\  \Big( \frac{\nu_i+2}{\nu_i+1} \Big)^{\nu_i+2}\Big( \frac{\nu_f+2}{\nu_f+1} \Big)^{\nu_f+2},\quad \mbox{for all}\quad \varepsilon\le \varepsilon_0.
\end{equation}
The right hand side of \eqref{eq:upperb} can be minimized with the optimal choice: $w_i=\Big( \frac{\sqrt{\delta_i\delta}-\delta_i}{\delta_f} \Big)^2$.
\end{corollary}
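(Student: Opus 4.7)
The plan is to derive the upper bound \eqref{eq:upperb} by going through Theorem~\ref{thm:Bessne}: since that theorem gives the exact asymptotic value of $\varepsilon^2\mathbb{E}[N_T^\varepsilon]$, it suffices to bound the limit by the right-hand side strictly, and then pick $\varepsilon_0$ small enough so the bound persists away from the limit. Writing $x=\frac{w_f}{w_i}\frac{\delta_i}{\delta_f}$, the limit is $\frac{T\delta_i}{ew_i}\cdot\mathcal{F}(x,\nu_f+2,1/(\nu_f+1),\nu_i+2,1/(\nu_i+1))^{-1}$, so everything reduces to producing a sufficiently sharp lower bound on $\mathcal{F}$.

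The main step, and the core of the argument, is the pointwise inequality
\[
\min\!\bigl(xe^{-A_f},e^{-A_i}\bigr)\ \ge\ \min(1,x)\,e^{-A_i-A_f},\qquad x>0,
\]
valid for all non-negative $A_i,A_f$. This is a short case analysis: when $x\ge 1$, if the $\min$ on the left equals $xe^{-A_f}$ use $x\ge 1\ge e^{-A_i}$, otherwise use $e^{-A_f}\le 1$; when $x\le 1$, compare $xe^{-A_f}$ with $xe^{-A_i-A_f}$ (using $A_i\ge 0$) in one case and $e^{-A_i}$ with $xe^{-A_i-A_f}$ (using $x\le 1\le e^{A_f}$) in the other. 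Taking expectation with $A_i$ and $A_f$ independent (with the Gamma distributions specified in the Bessel skeleton) yields
\[
\mathcal{F}(x,\nu_f+2,\tfrac{1}{\nu_f+1},\nu_i+2,\tfrac{1}{\nu_i+1})\ \ge\ \min(1,x)\,\mathbb{E}[e^{-A_i}]\,\mathbb{E}[e^{-A_f}],
\]
and the Laplace transform computation already performed in Section~\ref{sec:integer} (evaluating $\mathcal{L}_{A_k}(1)$ for $k\in\{i,f\}$) identifies the product of expectations with $\bigl(\frac{\nu_i+1}{\nu_i+2}\bigr)^{\nu_i+2}\bigl(\frac{\nu_f+1}{\nu_f+2}\bigr)^{\nu_f+2}$. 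The useful algebraic observation is that $\frac{\delta_i}{w_i\min(1,x)}=\max\!\bigl(\frac{\delta_i}{w_i},\frac{\delta_f}{w_f}\bigr)$: indeed $x\le 1\Leftrightarrow \delta_f/w_f\ge\delta_i/w_i$, and when $x\le 1$ one has $\frac{\delta_i}{w_i\,x}=\frac{\delta_i}{w_i}\cdot\frac{w_i\delta_f}{w_f\delta_i}=\frac{\delta_f}{w_f}$. Plugging everything back gives the claimed bound on $\lim_{\varepsilon\to 0}\varepsilon^2\mathbb{E}[N_T^\varepsilon]$. The inequality is strict because equality in the pointwise estimate occurs only on a set of zero Lebesgue measure (it forces $A_i=0$ or $A_f=0$), so the limit is strictly less than the right-hand side of \eqref{eq:upperb}, and the convergence in Theorem~\ref{thm:Bessne} provides an $\varepsilon_0>0$ beyond which the non-asymptotic bound holds.

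For the optimal weight choice, I minimize the explicit majorant $\max(\delta_i/w_i,\delta_f/w_f)$ over $(w_i,w_f)\in(0,1)^2$ subject to $w_f+2\sqrt{w_i}=1$. Since one term is decreasing in $w_i$ while the other is decreasing in $w_f$ and the two variables are coupled, the minimum of the maximum is attained where the two arguments coincide, i.e.\ $w_f=(\delta_f/\delta_i)\,w_i$. Substituting into the constraint yields the quadratic $(\delta_f/\delta_i)t^2+2t-1=0$ with $t=\sqrt{w_i}$, whose admissible positive root is $t=\frac{\sqrt{\delta_i\delta}-\delta_i}{\delta_f}$, producing $w_i=\bigl(\frac{\sqrt{\delta_i\delta}-\delta_i}{\delta_f}\bigr)^2$ as claimed.

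The only step requiring real care is the pointwise inequality together with verifying that the resulting lower bound on $\mathcal{F}$ reproduces exactly the $\max\bigl(\delta_i/w_i,\delta_f/w_f\bigr)$ structure in \eqref{eq:upperb}; this is a short case analysis but it must be executed cleanly. The optimization and the transfer from the limit to a non-asymptotic inequality for $\varepsilon\le\varepsilon_0$ are then routine.
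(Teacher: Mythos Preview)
Your proof is correct and follows essentially the same route as the paper: both bound $\mathcal{F}(x,\cdot)$ from below by $\min(1,x)\,\mathcal{L}_{A_i}(1)\mathcal{L}_{A_f}(1)$ (the paper reaches this via $\max(A-\log x,B)<A+B-\log x$, you via the equivalent pointwise inequality on the $\min$), obtain strictness from the a.s.\ positivity of the Gamma variables, and then invoke the limit in Theorem~\ref{thm:Bessne} to produce $\varepsilon_0$. Your derivation of the optimal $w_i$ via equating $\delta_i/w_i=\delta_f/w_f$ and solving the resulting quadratic in $\sqrt{w_i}$ is an addition the paper leaves to the reader.
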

\begin{proof}[Proof of Corollary \ref{cor:bessne}] The statement is a direct consequence of the convergence result \eqref{eq:thm:2}, combined with the properties of the function $\mathcal{F}$. More precisely, for $x\in(0,1]$, the independence of the variates $A$ and $B$ leads to
\begin{align*}
\mathcal{F}(x,a,\lambda,b,\mu)&=\mathbb{E}[\min(x e^{-A},e^{-B})]=\mathbb{E}[\exp\{-\max(A-\log(x),B)\}]\\
&> \mathbb{E}[\exp\{-(A+B-\log(x))\}]=x\mathbb{E}[e^{-A}]\mathbb{E}[e^{-B}]\\
&=x\mathcal{L}_A(1)\mathcal{L}_B(1)=x(1+\lambda)^{-a}(1+\mu)^{-b},
\end{align*}
 since $\max(A-\log(x),B)<A+B-\log(x)$ as soon as $A>0$ and $B>0$, that is almost surely.
Here $\mathcal{L}_{A}(s)$ stands for the Laplace transform of the variate $A$, that is
\(
\mathcal{L}_{A}(s)=(1+\lambda s)^{-a}
\). Moreover, if $x\ge 1$, then similar computations lead to
\begin{align*}
\mathcal{F}(x,a,\lambda,b,\mu)&=x\mathbb{E}[\min(e^{-A},x^{-1}e^{-B})]> x\cdot x^{-1}(1+\lambda)^{-a}(1+\mu)^{-b}=(1+\lambda)^{-a}(1+\mu)^{-b}.
\end{align*}
Consequently, for any $x>0$, we get
\begin{align}\label{top}
\mathcal{F}(x,a,\lambda,b,\mu)^{-1}&< \max(1,x^{-1})\,(1+\lambda)^{a}(1+\mu)^{b}.
\end{align}
Combining \eqref{top} with the limiting value \eqref{eq:thm:2} leads therefore to the announced upper-bound \eqref{eq:upperb}.
\end{proof}

\begin{proof}[Proof of Theorem \ref{thm:Bessne}] The structure of the proof is similar to Theorem \ref{eq:thm:1}. First we replace the paths of the Bessel process by some other paths with the same distribution. Then, on the new paths, we introduce a skeleton. Finally we count the number of points needed to cover a deterministic time interval $[0,T]$.\\[5pt]
\noindent \emph{Step 1: $\varepsilon$-strong approximation.} \\[5pt]
Let us consider a Bessel process of non integer dimension $\delta>1$, that is the solution of equation \eqref{eq:def:Be} with initial value $y\ge 0$. Let us denote the distribution of the squared process by $\mathbb{Q}^{\delta,y^2}$. We recall that the dimension can be decomposed as follows:  $\delta=\delta_i+\delta_f$ where $\delta_i=\lfloor \delta\rfloor$. Using the identity in law pointed out by Shiga and Watanabe \cite{Shiga}, we obtain:
\begin{equation}
\mathbb{Q}^{\delta,y^2}=\mathbb{Q}^{\delta_i,y^2}\star \mathbb{Q}^{\delta_f,0},
\label{eq:shiga}
\end{equation}
where $\star$ stands for the convolution of the probability distributions. Thus, by introducing two independent Bessel processes $\overline{Z}$ and $\widehat{Z}$, one of integer dimension $\delta_i$ starting in $y$: $(\overline{Z}_t(y))_{t\ge 0}$, and the other of non integer dimension $\delta_f$, starting in $0$: $(\widehat{Z}_t(0))_{t\ge 0}$ (when the starting value is equal to $0$, we shall drop the dependence for notational simplicity), then \eqref{eq:shiga} leads to the identity
\begin{equation}
(Z^{\delta,y}_t)_{t\ge 0}\overset{(d)}{=}\Big(\overline{Z}_t(y)^2+\widehat{Z}_t(0)^2\Big)^{1/2}_{t\ge 0}=:\Big(\overline{Z}_t(y)^2+\widehat{Z}_t^2\Big)^{1/2}_{t\ge 0}.\label{eq:shiga2}
\end{equation}
Bessel processes with integer dimension $\delta_i$ play an important role since they can be represented as the norm of the $\delta_i$-dimensional Brownian motion. We just note that the standard Brownian motion  $(W_t)$ is rotational invariant and moreover:
\[
(W_t)_{t\ge 0}=\Big(\Theta_t\cdot\Vert W_t\Vert\Big)_{t\ge 0},
\]
where $(\Theta_t)_{t> 0}$ is a continuous stochastic process valued in the unitary sphere $\mathcal{S}^{\delta_i}$, independent of $(\Vert W_t\Vert)_{t\ge 0}$. We fix $\Theta_0=0$ (not continuous for $t=0$) and observe that $\Theta_t$ is uniformly distributed at any fixed time $t>0$. 

\noindent Let us denote $\underline{y}=(y,0,\ldots,0)\in\mathbb{R}^{\delta_i}$. We deduce that 
\begin{align}
(\overline{Z}_t(y))_{t\ge 0}&\overset{(d)}{=}\Big\Vert \underline{y}+\Theta_t\Vert W_t\Vert\,\Big\Vert_{t\ge 0}=\Big( y^2+2y\,\pi_1(\Theta_t)\Vert W_t\Vert+\Vert W_t\Vert^2 \Big)^{1/2}_{t\ge 0},
\label{eq:entier}
\end{align}
where $\pi_1$ corresponds to the projection on the first coordinate. Combining \eqref{eq:shiga2} and \eqref{eq:entier} leads to
\begin{equation}\label{eq:decomp}
(Z^{\delta,y}_t)_{t\ge 0}\overset{(d)}{=}\mathcal{X}_t:=\Big( y^2+2y\,\pi_1(\Theta_t)\overline{Z}_t +\overline{Z}_t^2 +\widehat{Z}_t^2 \Big)^{1/2}_{t\ge 0},
\end{equation}
where the processes $(\overline{Z}_t)_{t\ge 0}$, $(\widehat{Z}_t)_{t\ge 0}$ and $(\Theta_t)_{t\ge 0}$ are independent. Using the strong Markov property of the Bessel process, we can propose a more complex identity. If $s_1$ is a stopping time with respect to the filtration $\mathcal{F}^{(1)}:=(\mathcal{F}_t^{(1)})_{t\ge 0}$ induced by $(W,\widehat{Z})$  (also denoted in the sequel  $(W^{(1)},\widehat{Z}^{(1)})$) then the conditional distribution of $(\mathcal{X}_{s_1+t})_{t\ge 0}$ given $\mathcal{F}_{s_1}$ is identical to the distribution 
\[
(\mathcal{X}_t^{(2)})_{t\ge 0}:=\Big( \mathcal{X}_{s_1}^2+2\mathcal{X}_{s_1}\,\pi_1(\Theta_t^{(2)})\overline{Z}_t^{(2)} +(\overline{Z}_t^{(2)})^2 +(\widehat{Z}_t^{(2)})^2 \Big)^{1/2}_{t\ge 0},
\]
where $((\overline{Z}^{(k)},\widehat{Z}^{(k)},\Theta^{(k)})_{t\ge 0})_{k\ge 2}$ is a family of independent copies of $(\overline{Z}^{(1)},\widehat{Z}^{(1)},\Theta^{(1)})_{t\ge 0}$. So we can build a particular stochastic process $(\mathcal{X}_t)_{t\ge 0}$ combining $\mathcal{X}$ (also denoted $\mathcal{X}^{(1)}$) and $\mathcal{X}^{(2)}$ by the following identity
\[
\overline{\mathcal{X}}_t^{(2)}:=\mathcal{X}^{(1)}_t1_{\{t< s_1\}}+\mathcal{X}^{(2)}_{t-s_1}1_{\{t\ge s_1\}},\quad t\ge 0.
\]
Let us note that both $(\overline{\mathcal{X}}_t)_{t\ge 0}$ and $(Z^{\delta,y}_t)_{t\ge 0}$ are identically distributed. Let us go on with the modification of the process. To that end, we denote by $(\mathcal{F}_t^{(2)})_{t\ge 0}$ the filtration generated by the following stochastic processes: $(W^{(1)},\widehat{Z}^{(1)})_{t\wedge s_1}$ and $(W^{(2)},\widehat{Z}^{(2)})_{(t-s_1)\vee 0}$. For any $\mathcal{F}^{(2)}$-stopping time  $s_2>s_1$, we can define
\[
\overline{\mathcal{X}}_t^{(3)}:=\mathcal{X}^{(1)}_t1_{\{t< s_1\}}+\mathcal{X}^{(2)}_{t-s_1}1_{\{s_1\le t< s_2\}}+\mathcal{X}^{(3)}_{t-s_2}1_{\{t\ge  s_2\}},\quad t\ge 0,
\]
where $\mathcal{X}^{(3)}$ is defined by 
\[
(\mathcal{X}_t^{(3)})_{t\ge 0}:=\Big( (\mathcal{X}^{(2)}_{s_2-s_1})^2+2\mathcal{X}^{(2)}_{s_2-s_1}\,\pi_1(\Theta_t^{(3)})\overline{Z}_t^{(3)} +(\overline{Z}_t^{(3)})^2 +(\widehat{Z}_t^{(3)})^2 \Big)^{1/2}_{t\ge 0}.
\]
The procedure continues step by step in this way. For any increasing sequence of stopping time $(s_n)_{n\ge 1}$, satisfying $\lim_{n\to\infty}s_n=+\infty$, we construct the stochastic process:
\begin{equation}
\label{eq:def:procinfini}
\overline{\mathcal{X}}_t^{(\infty)}=\mathcal{X}_{t-s_n}^{(n+1)},\quad \mbox{if}\ s_n\le t<s_{n+1},
\end{equation}
with the definition 
\begin{equation}\label{eq:def:procinfini2}
(\mathcal{X}_t^{(n+1)})_{t\ge 0}:=\Big( (\mathcal{X}^{(n)}_{s_n-s_{n-1}})^2+2\mathcal{X}^{(n)}_{s_n-s_{n-1}}\,\pi_1(\Theta_t^{(n+1)})\overline{Z}_t^{(n+1)} +(\overline{Z}_t^{(n+1)})^2 +(\widehat{Z}_t^{(n+1)})^2 \Big)^{1/2}_{t\ge 0}.
\end{equation}
By construction, we observe that $(\overline{\mathcal{X}}_t^{\infty})_{t\ge 0}$ and $(Z^{\delta,y}_t)_{t\ge 0}$ are identically distributed. Since the definition of the $\varepsilon$-strong approximation only depends on the distribution of the stochastic process, it suffices therefore to point out an approximation of $(\overline{\mathcal{X}}_t^{\infty})_{t\ge 0}$ in order to prove the statement.\\[5pt]
\noindent\emph{Step 2: Bessel skeleton}\\[5pt]
Let us now consider a particular increasing family of stopping times. Let $w\in]0,1[$ be a fixed parameter. We define $\overline{\tau}_n$ (respectively $\widehat{\tau}_n$), the first passage time of the Bessel process $(\overline{Z}_t^{(n)})_{t\ge 0}$ (resp. $(\widehat{Z}_t^{(n)})_{t\ge 0}$), through the curved boundary $\phi_{\delta_i,\varepsilon\sqrt{w_i}}$ (resp. $\phi_{\delta_f,\varepsilon\sqrt{w_f}}$), defined in \eqref{def:phi}. We construct a new stopping time $u_n$ and the associated cumulative time $s_n$, as follows:
\begin{equation}\label{defdeu}
u_n:=\overline{\tau}_n \wedge \widehat{\tau}_n\quad\mbox{and}\quad s_{n}=s_{n-1}+u_n,\quad n\ge 1,
\end{equation}
with the initial value $s_0=0$. The sequence of stopping times $(s_n)_{n\ge 0}$ satisfies the conditions developed in the previous paragraph Step 1. We can therefore construct the continuous process $(\mathcal{X}^\infty_t)_{t\ge 0}$ using \eqref{eq:def:procinfini}--\eqref{eq:def:procinfini2} and the particular sequence $(s_n)_{n\ge 0}$, just described. Since the maximal value of the curved boundary $\phi_{\delta,\varepsilon}$ equals $\varepsilon$, we can emphasize a crucial upper-bound of the difference  $\mathcal{D}^{(n)}_t:=|\overline{\mathcal{X}}^{\infty}_t-\overline{\mathcal{X}}^{\infty}_{s_n}|$. For any $s_n\le t<s_{n+1}$, 
\begin{align*}
\mathcal{D}_t^{(n)}=\Big|\Big((\mathcal{X}^{(n)}_{s_n-s_{n-1}})^2+2\mathcal{X}^{(n)}_{s_n-s_{n-1}}\,\pi_1(\Theta_t^{(n+1)})\overline{Z}_t^{(n+1)} +(\overline{Z}_t^{(n+1)})^2 +(\widehat{Z}_t^{(n+1)})^2 \Big)^{1/2}-\mathcal{X}^{(n)}_{s_n-s_{n-1}} \Big|\\
=\Big|\Big((\mathcal{X}^{(n)}_{s_n-s_{n-1}}+\pi_1(\Theta_t^{(n+1)})\overline{Z}_t^{(n+1)} )^2+(\overline{Z}_t^{(n+1)})^2(1-\pi_1^2(\Theta_t^{(n+1)})) +(\widehat{Z}_t^{(n+1)})^2 \Big)^{1/2}-\mathcal{X}^{(n)}_{s_n-s_{n-1}} \Big|.
\end{align*}
\begin{Calculus} Ici une petite remarque sur une inégalité très simple. Soient $a$ et $b$ deux nombres positifs ou nuls, on peut alors montrer que  $|\sqrt{a+b}-x|\le |\sqrt{a}-x|+\sqrt{b}$. En effet
\[
-\sqrt{b}-|\sqrt{a}-x|\le -|\sqrt{a}-x|\le \sqrt{a}-x \le  \sqrt{a+b}-x 
\]
et
\[
\sqrt{a+b}-x \le \sqrt{a}+\sqrt{b}-x\le |\sqrt{a}-x|+\sqrt{b}.
\]
On obtient également une autre inégalité : $\Big| |a+b|-|a|\Big|\le |b|$. En effet, on a d'une part
\[
|a+b|-|a|\le |a|+|b|-|a|=|b|,
\]
et d'autre part
\[
|a+b|-|a|=|a+b|-|a+b-b|\ge |a+b|-|a+b|-|b|=-|b|.
\]
\end{Calculus}
\noindent Let us consider $a$ and $b$ two non negative numbers, then for any $x\in\mathbb{R}$, we have $|\sqrt{a+b}-x|\le |\sqrt{a}-x|+\sqrt{b}$, $\sqrt{a+b}\le \sqrt{a}+\sqrt{b}$ and finally $| |a+b|-|a||\le |b|$. Applying to the previous expression of $\mathcal{D}_t^{(n)}$, we obtain
\begin{align*}
\mathcal{D}^{(n)}_t&\le \Big| |\mathcal{X}^{(n)}_{s_n-s_{n-1}}+\pi_1(\Theta_t^{(n+1)})\overline{Z}_t^{(n+1)} |-\mathcal{X}^{(n)}_{s_n-s_{n-1}}\Big|+\sqrt{w_i\varepsilon^2+w_f\varepsilon^2}\\
&\le | \pi_1(\Theta_t^{(n+1)})\overline{Z}_t^{(n+1)} |+\varepsilon\sqrt{w_i+w_f}\le \varepsilon(\sqrt{w_i}+ \sqrt{w_i+w_f})=\varepsilon.
\end{align*}
The last equality is a consequence of the particular relation between $w_i$ and $w_f$ introduced in \eqref{eq:relw}. We deduce therefore that the stochastic process defined by $\widehat{y}_t=\sum_{n\ge 0}\overline{\mathcal{X}}^\infty_{s_n}1_{\{s_n\le t<s_{n+1}\}}$, is an $\varepsilon$-strong approximation of the Bessel paths (see Definition \ref{def}). In order to prove the statement of Theorem \ref{thm:Bessne}, it suffices to check that $(y_t^\varepsilon)_{t\ge 0}$ defined in the statement and $(\widehat{y}_t)_{t\ge 0}$, are identically distributed. Let us therefore describe the joint distribution of $(u_n)_{n\ge 1}$, $(s_n)_{n\ge 0}$ and $(\overline{\mathcal{X}}^\infty_{s_n})_{n\ge 1}=(\mathcal{X}^{(n)}_{s_{n}-s_{n-1}})_{n\ge 1}$ and compare it to the Bessel skeleton.
\begin{itemize}
\item Using the definition of the stopping times $u_n$ in \eqref{defdeu}, we observe that $(u_n)_{n\ge 0}$ is a sequence of independent and identically distributed random variables. 
Moreover, on one hand, the distribution of the first passage time $\overline{\tau}_n$ is identical to that of $\frac{\varepsilon^2 w_i}{\delta_i}\,e^{1-\ai}$, where $\ai$ stands for a Gamma distributed r.v of parameters $\nu_i+2$ and $1/(\nu_i+1)$ (see for instance \cite{Deaconu-Herrmann-2017}). 
On the other hand, $\widehat{\tau}_n$ and $\frac{\varepsilon^2 w_f}{\delta_f}\,e^{1-\af}$ are identically distributed. Here $\af$ corresponds to Gamma distributed r.v. with parameters $\nu_f+2$ and $1/(\nu_f+1)$. The stopping time $u_n$ is the minimum of these two first passage times and matches the stopping time $u_n^\varepsilon$ introduced in Algorithm $({\rm BeS})^w_\delta$.
Consequently $(s_n)_{n\ge 0}$ and $(s_n^\varepsilon)_{n\ge 0}$ are identically distributed. 
\item Let us now describe the sequence $(\mathcal{X}^{(n)}_{s_{n}-s_{n-1}})_{n\ge 1}$. It is defined recursively by \eqref{eq:def:procinfini2}. In this equation, we need to know the value of three random variables: $\Theta_{u_{n}}$, $\overline{Z}^{(n)}_{u_{n}}$, $\widehat{Z}^{(n)}_{u_{n}}$. Since $u_n$ is only linked to stopping times defined on the processes $\overline{Z}^{(n)}$ and $\widehat{Z}^{(n)}$, which are independent from $\Theta$, and since $\Theta_t$ is uniformly distributed for any $t>0$, we obtain that $\Theta_{u_{n}}$ is uniformly distributed on the sphere and independent of both $\overline{Z}^{(n)}_{u_{n}}$ and $\widehat{Z}^{(n)}_{u_{n}}$. Moreover the definition \eqref{eq:defdeu} implies to take into account two different cases: either $u_n=\overline{\tau}_n<\widehat{\tau}_n$ or $u_n=\widehat{\tau}_n<\overline{\tau}_n$. In the first case, we have, on the event $u_n=t$, $\overline{Z}^{(n)}_{u_{n}}=\phi_{\delta_i,\varepsilon\sqrt{w_i}}(t)$ and the distribution of $\widehat{Z}^{(n)}_{u_{n}}$ corresponds to $({\rm CD})_{\alpha_f,\beta_f}^{2t}$ as announced in Lemma \ref{lem:identlaw}: a Bessel process conditioned not to have reach a curved boundary. In the second case, we observe the reverse situation: on the event $u_n=t$, $\widehat{Z}^{(n)}_{u_{n}}=\phi_{\delta_f,\varepsilon\sqrt{w_f}}(t)$ and the distribution of $\overline{Z}^{(n)}_{u_{n}}$ corresponds to $({\rm CD})_{\alpha_i,\beta_i}^{2t}$ as announced in Lemma \ref{lem:identlaw}. To sum up, 
\[
(\Theta_{u_{n}}, \overline{Z}^{(n)}_{u_{n}}, \widehat{Z}^{(n)}_{u_{n}})\overset{(d)}{=}(V_n,\mathcal{Y},\mathcal{Z}),
\]
where $V_n$,$\mathcal{Y}$ and $\mathcal{Z}$ correspond to the variables introduced in Algorithm $({\rm BeS})_{\delta}^w$. Due to \eqref{eq:def:procinfini2}, we deduce quite easily that $(y_t^\varepsilon)_{t\ge 0}$ and $(\widehat{y}_t)_{t\ge 0}$ are identically distributed. We conclude that $(y_t^\varepsilon)_{t\ge 0}$ is an $\varepsilon$-strong approximation of the Bessel paths.
\end{itemize}
\noindent \emph{Step 3: Number of points necessary to cover the time interval $[0,T]$.}\\[5pt]
The arguments for the description of the number of points have already been introduced in the proof of Theorem \ref{thm:Bess1}. We introduce $(\widehat{N}_t)_{t\ge 0}$ a Poisson process with independent and identically distributed arrivals $(M_n)_{n\ge 1}$ where 
\[
M_n=\Big(\frac{ w_i}{\delta_i}\,e^{1-\ai_n}\Big)\wedge \Big(\frac{w_f}{\delta_f}\,e^{1-\af_n}\Big),
\]
with $\ai$ and $\af$ defined in Algorithm $({\rm BeS})_\delta^w$. We denote $\mu=\mathbb{E}[M_1]$. The classical asymptotic result holds:
\begin{equation}
\label{eq:limitpoiss}
\lim_{t\to \infty}\frac{\mathbb{E}[\widehat{N}_t]}{t}=\frac{1}{\mu}= \frac{\delta_i}{e w_i} \mathcal{F}\Big( \frac{w_f}{w_i}\frac{\delta_i}{\delta_f},\nu_f+2,\frac{1}{\nu_f+1},\nu_i+2,\frac{1}{\nu_i+1} \Big)^{-1},
\end{equation}
 where $\mathcal{F}$ is defined in the statement of Theorem \ref{thm:Bessne}. The mean of $M_1$ plays an important role in the limit so do the variance for the confidence interval. Due to the scaling property of the Gamma distribution, we notice that ${\rm Var}(M_1)=\frac{e^2w_i^2}{\delta_i^2}\sigma^2$, where $\sigma^2$ is defined by \eqref{def:sigma2}. The central limit theorem, applied in the counting process context, leads to 
\[
\lim_{t\to\infty}\sqrt{\frac{t\mu^3\delta_i^2}{e^2w_i^2\sigma^2}}\Big( \frac{\widehat{N}_t}{t}-\frac{1}{\mu} \Big)=G\quad \mbox{in distribution,}
\]
where $G$ is a $\mathcal{N}(0;1)$ standard Gaussian variate. Let us observe that the number of approximation points $N^\varepsilon_T$ is directly linked in distribution to the Poisson process just defined. More exactly, we have $N^\varepsilon_T\overset{(d)}{=}\widehat{N}_{\frac{T}{\varepsilon^2}}$, which gives directly the statement: the limit with respect to the time variable is replaced by the limit with respect to the parameter $\varepsilon$.
\end{proof}

\section{Related processes and numerical illustration}\label{sec:num}
\subsection{Numerics: Bessel processes}
Let us first illustrate the strong approximation of Bessel processes. We choose to observe the paths on some given time interval $[0,T]$. In particular, we are able to present a Bessel skeleton and the corresponding upper and lower bounds for some small precision value $\varepsilon$. 

\begin{figure}[h]
\centering
\includegraphics[width=10cm]{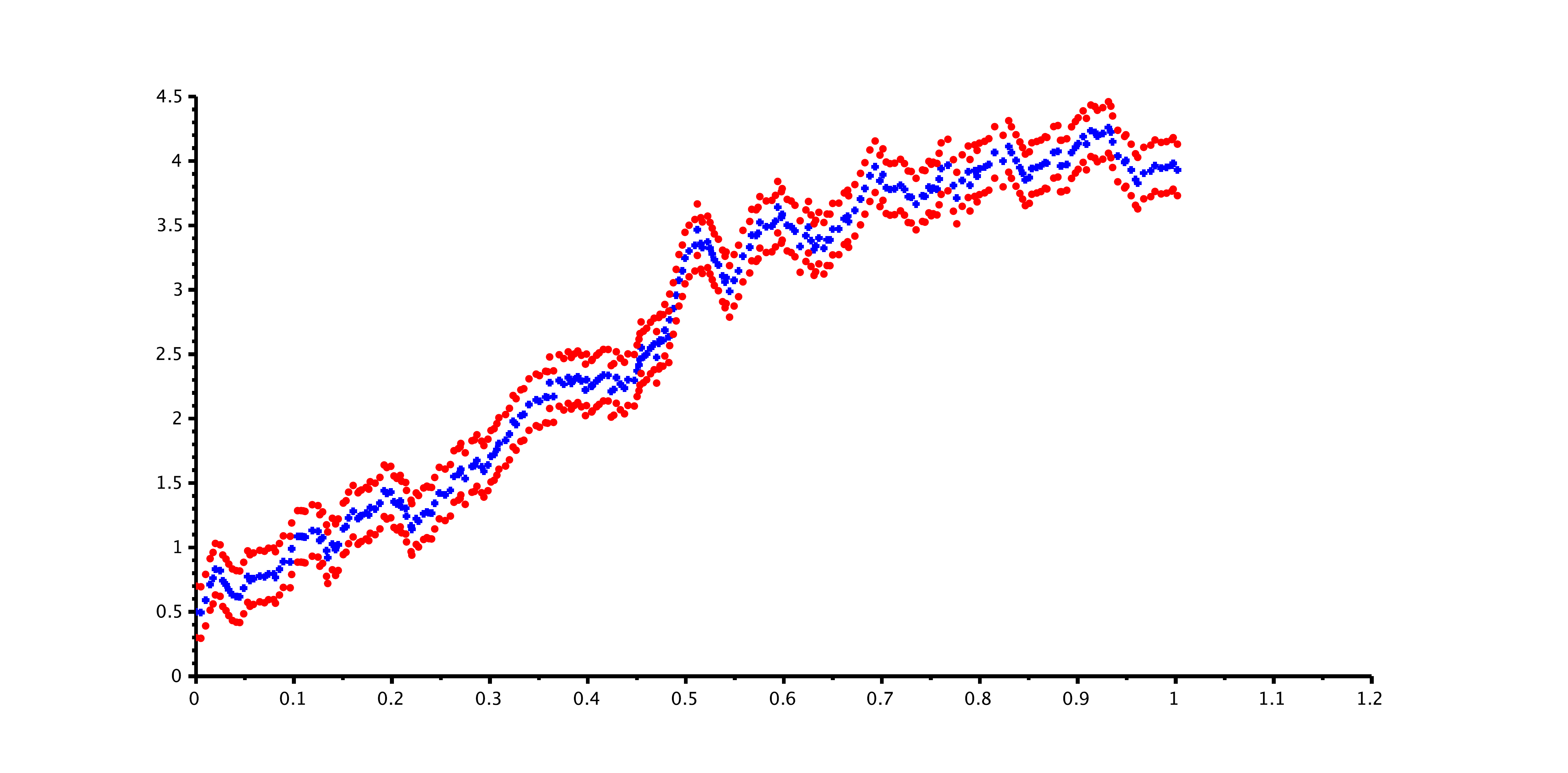}\hspace*{-0.7cm}\includegraphics[width=7cm]{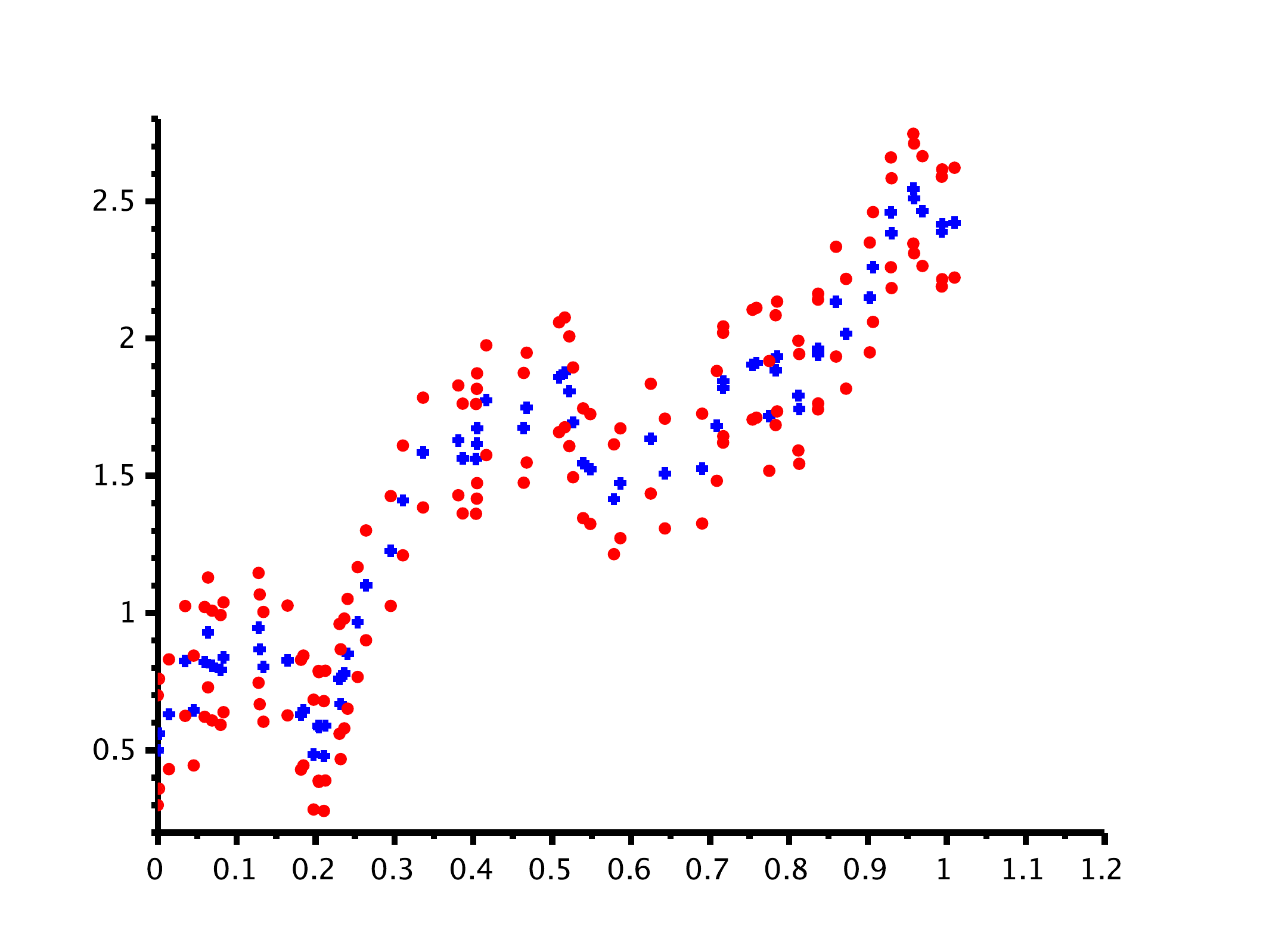}
\caption{\small A trajectory of a Bessel process either of dimension 10 (left) or of dimension 2 (right) represented by its squeleton (crosses) and the lower and upper almost sure bounds on $[0,1]$ with $y_0=0.5$, $\varepsilon=0.2$}
\label{fig:my_label1}
\end{figure}

In Figure \ref{fig:my_label1}, the skeletons correspond for instance to a Bessel process either of dimension $10$ or of dimension $2$. As we can observe, the variations of the skeleton when time elapses are obviously smaller than the limit $\varepsilon$ chosen for the approximation. We can interpret this as: even if the maximal size of the spheroids corresponds to this specific value $\varepsilon$, the difference between the values of two successive points of the Bessel skeleton is not often near to the maximum. Indeed the spheroid is applied to the $\delta$-dimensional Brownian motion in a first step, and then, in a second step, a random projection is applied, see the algorithm  $({\rm BeS})_\delta$ for integer dimensions. So denoting by $\tau$ the first Brownian exit time of the spheroid \eqref{def:phi} and by $p_\tau$ its probability density function, we can compute the following average size   
\begin{align*}
\mathbb{E}[\phi_{\delta,\varepsilon}(\tau)]&=\int_0^{\frac{e\varepsilon^2}{\delta}}\phi_{\delta,\varepsilon}(t)\,p_\tau(t)\,\dint t=\int_0^{\frac{e\varepsilon^2}{\delta}}\sqrt{\delta t\ln\Big( \frac{e\varepsilon^2}{\delta t} \Big)}\frac{1}{t\Gamma(\delta/2)}\left( \frac{\delta^2t}{2e\varepsilon^2}\,\ln\Big( \frac{e\varepsilon^2}{\delta t} \Big) \right)^{\delta/2}\,\dint t\\
&=\varepsilon\frac{\sqrt{e}}{\Gamma(\delta/2)}\Big( \frac{\delta}{2} \Big)^{\delta/2}\int_0^1\frac{1}{u}\,\Big( u\ln\frac{1}{u} \Big)^{(\delta+1)/2}\,\dint u=:\varepsilon \eta(\delta).
\end{align*}
We can evaluate this last integral
\begin{align*}
\int_0^1\frac{1}{u}\,\Big( u\ln\frac{1}{u} \Big)^{(\delta+1)/2}\,\dint u &= \Big(\frac{2}{\delta+1}\Big)^{\frac{\delta+3}{2}} \Gamma\left(\frac{\delta+3}{2}\right),
\end{align*}
and by using the properties of the Gamma function obtain the explicit form:
\begin{equation*}
\eta(\delta) = \sqrt{ 2\pi e}
\frac{\Gamma(\delta)}
{\left[\Gamma\left(\frac{\delta}{2}\right)\right]^2} \frac{\delta^{\frac{\delta}{2}}}{(\delta+1)^{\frac{\delta+1}{2}}}2^{1-\delta}.
\end{equation*}

\noindent\begin{minipage}{7.5cm}
The average is obviously proportional to $\varepsilon$ and the constant $\eta(\delta)$ can be evaluated easily. We can observe that $\eta$ is a non decreasing function of the dimension $\delta$ on the interval $[2,+\infty)$ starting with an estimated value $\eta(2)=0.7953$. This function is represented on the opposite figure. Let us note that for high dimensions the average size $\mathbb{E}[\phi_{\delta,\varepsilon}(\tau)]$ is close to $\varepsilon$, which is the optimal size for the strong approximation procedure. 
\end{minipage}\hspace*{1.5cm}
\begin{minipage}{7cm}
\centerline{\includegraphics[width=8cm]{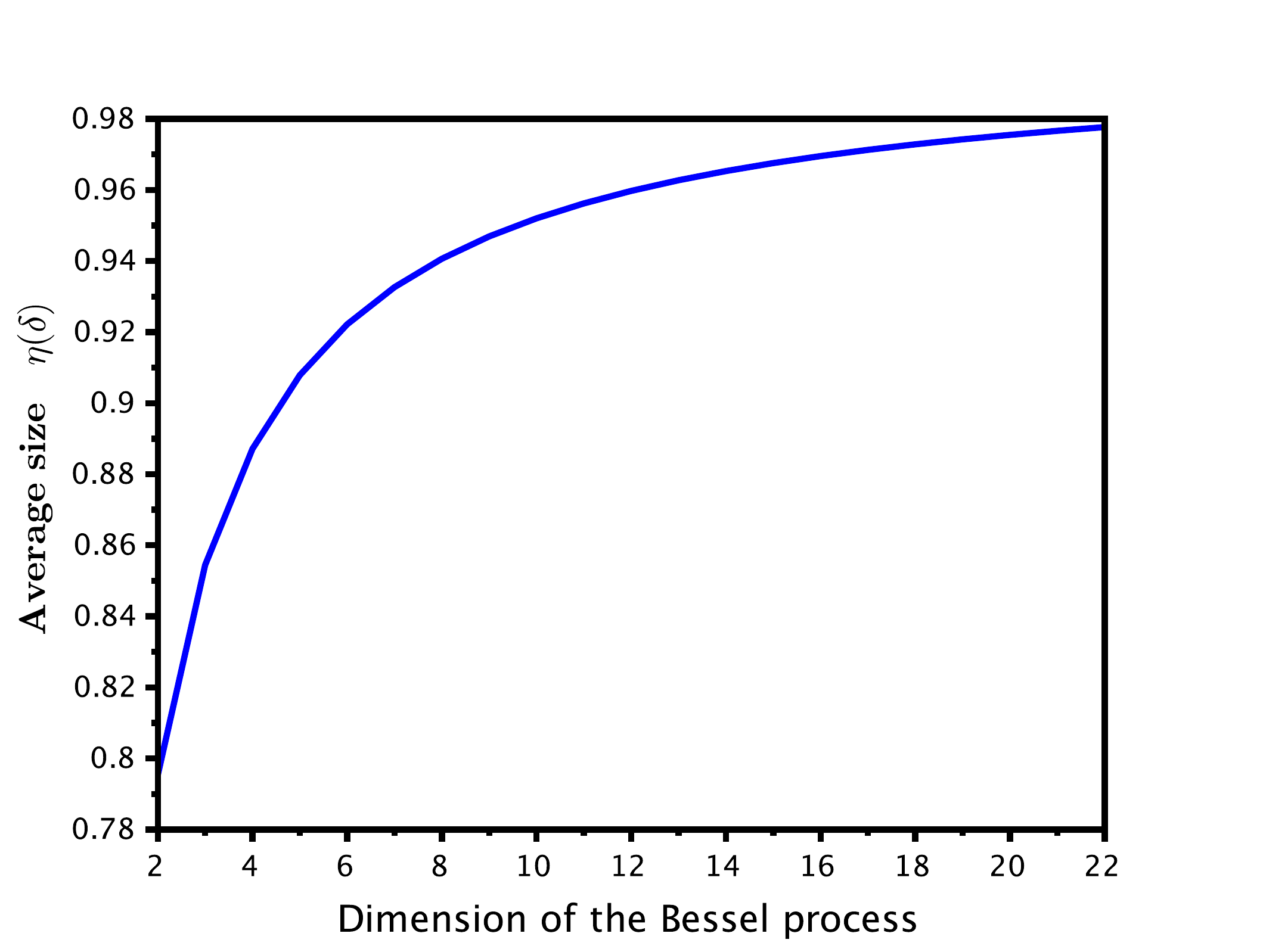}}
\end{minipage}\\[4pt]
On the one hand, the increments of the Bessel skeleton depend on the Brownian exit time of the spheroid. On the other hand, they are also strongly related to the projection on the first coordinate of a random variable $V$ uniformly distributed on the sphere of dimension $\delta$: $\pi_1(V)$. For $\delta>2$, using the spherical coordinates, we obtain
\begin{align}
\label{eq:pi}
\mathbb{E}[|\pi_1(V)|]&=\frac{2^{\delta}}{(\delta-1)(2\pi)^{\delta/2}}\left( \int_0^\infty r^{\delta-1}e^{-\frac{r^2}{2}}\,\dint r \right)\times \prod_{k=0}^{\delta-3}W_{k},
\end{align}
where $W_k$ stands for Wallis' integrals $W_n:=\int_0^{\pi/2}\sin^n(x)\,\dint x$. 
Let us note that the integral appearing in \eqref{eq:pi} can be related to the moments of a standard Gaussian variate. \\[4pt]
\begin{minipage}{7.5cm}
We deduce that
\[
\int_0^\infty r^{2k}e^{-\frac{r^2}{2}}\,\dint r=\sqrt{\frac{\pi}{2}}\,\frac{(2k)!}{2^kk!}
\]
and
\[
\int_0^\infty r^{2k+1}e^{-\frac{r^2}{2}}\,\dint r=2^kk!.
\]
We can therefore compute the average size of the projection which of course depends on the dimension. Let us just note that the particular dimension $\delta=2$ leads to $\mathbb{E}[|\pi_1(V)|]=\frac{2}{\pi}\approx  0.6366$. The opposite figure gives this dependence: for large dimensions the projection procedure reduces the difference between two successive points of the skeleton.
\end{minipage}\hspace*{1.5cm}
\begin{minipage}{8.5cm}
\centerline{\includegraphics[width=9cm]{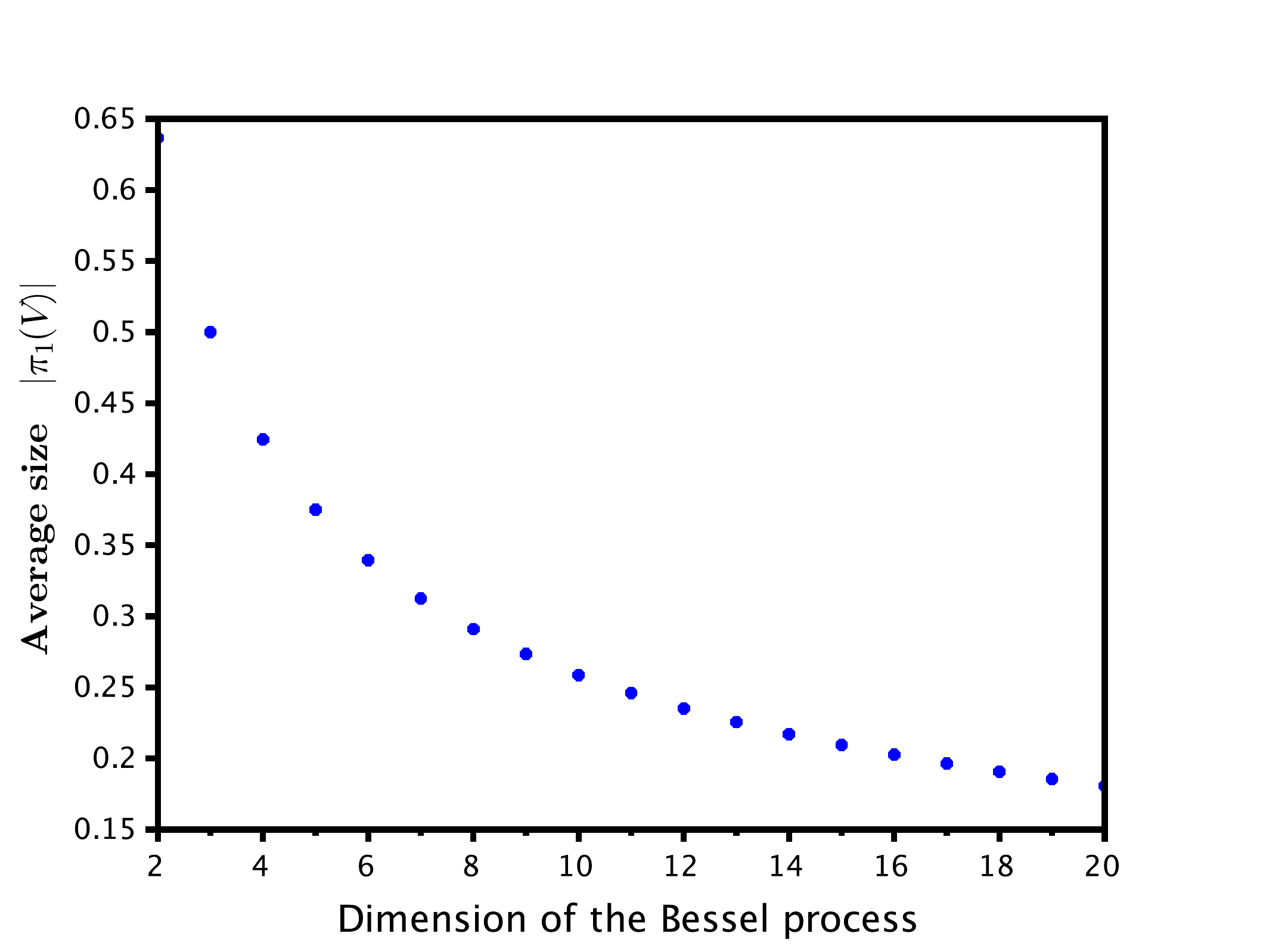}}
\end{minipage}\\[4pt]
 
We notice that this reduction is not too strong, for $\delta=20$ for instance the reduction corresponds to a division by $5$.

The efficiency of the approximation is deeply related to the number of spheroids used to cover the time interval $[0,T]$. Theorem \ref{thm:Bess1} (Central Limit Theorem) points out the asymptotic result as $\varepsilon$ tends to $0$ for Bessel processes with integer dimensions. Numerical experiments permit to obtain an histogram of the number of points for the generation of $10\,000$ skeletons, see Figure \ref{fig:my_label}.

\begin{figure}[h]
\centering
\includegraphics[width=8.5cm]{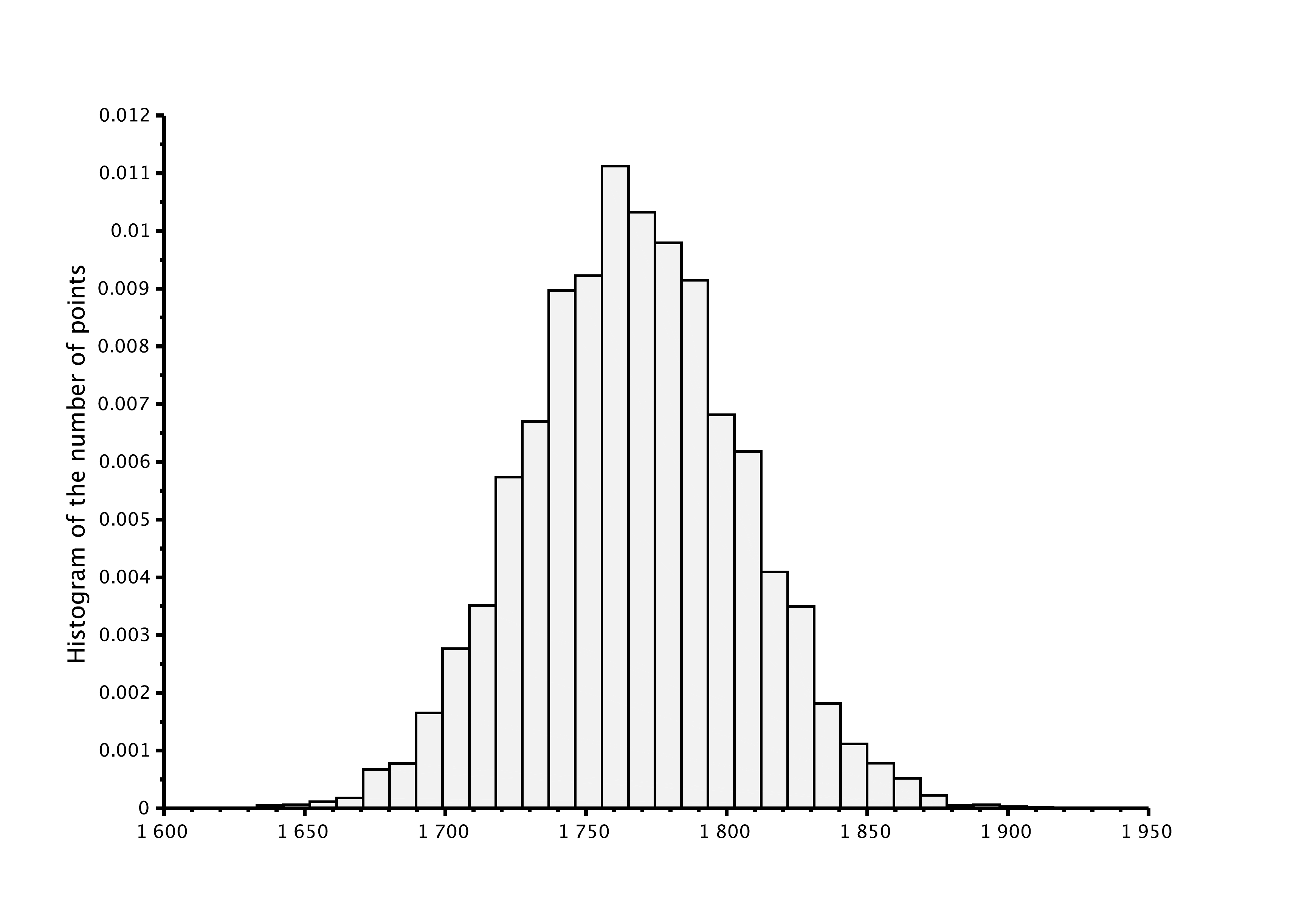}\hspace*{-0.5cm}\includegraphics[width=8.5cm]{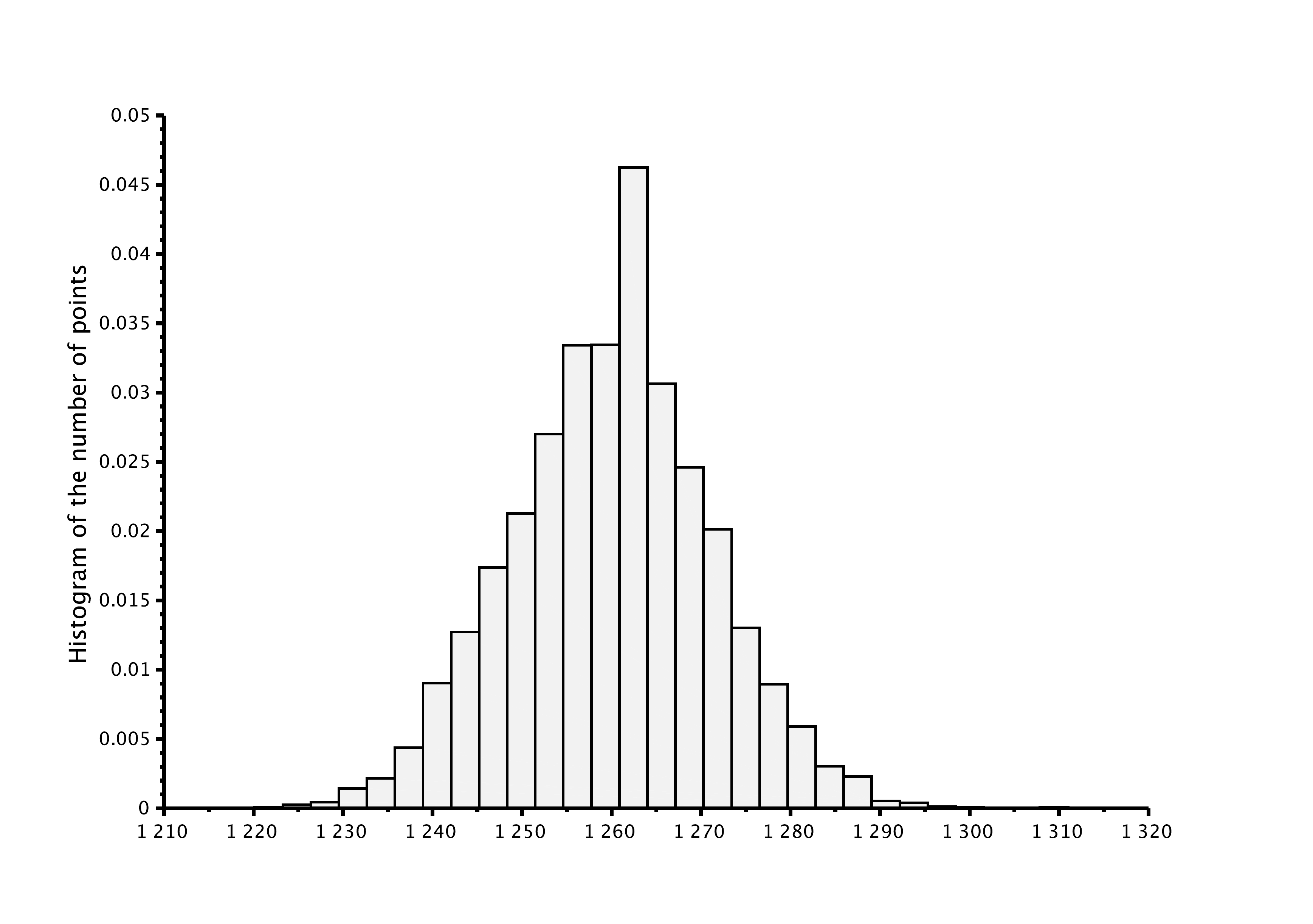}
\caption{\small Histogram of the number of points $N_T^\varepsilon$ for a sample of Bessel processes of dimension $2$ (left) or dimension $20$ (right) observed on the time interval $[0,3]$ with $y_0=0.5$, $\varepsilon=0.05$, sample size $10\,000$.}
\label{fig:my_label}
\end{figure}

A characteristic of the asymptotic behaviour is the mean number of spheroids necessary to cover some time interval $[0,T]$. We propose here to estimate it by using an empirical mean issued from a sample of $1\ 000$ trajectories. As already mentioned, we observe a dependence with respect to the Bessel dimension, the number of spheroids used by the algorithm increases as $\delta$ increases. Figure \ref{fig:my_labell} emphasizes that this dependence looks linear. Moreover the estimation of the average permits to illustrate the asymptotic linear dependence with respect to the parameter $1/\varepsilon^2$, here $\varepsilon$ stands for the accuracy of the strong approximation. 

\begin{figure}[H]
\centering
\includegraphics[width=8cm]{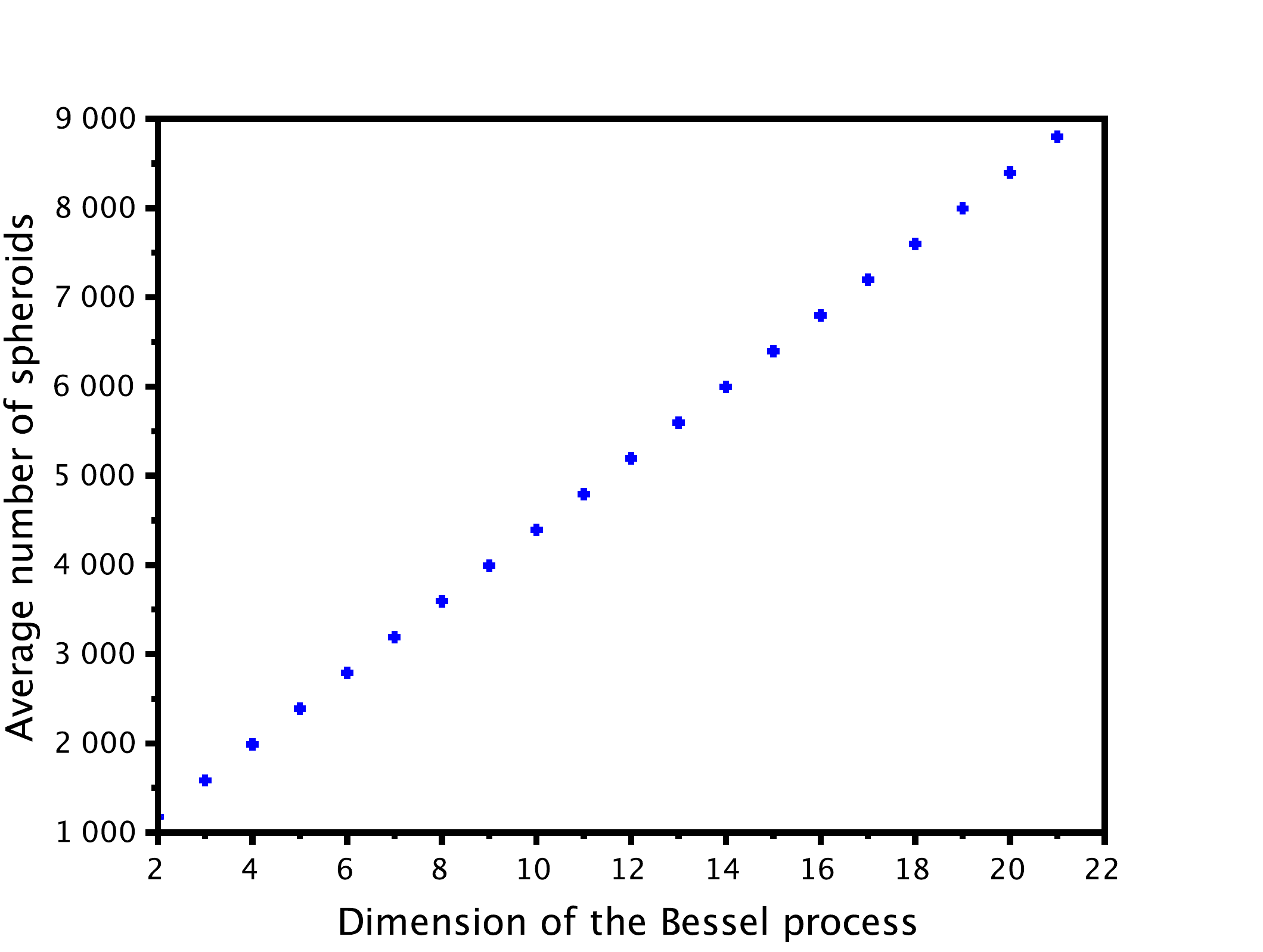}\hspace*{-0.5cm}\includegraphics[width=8.7cm]{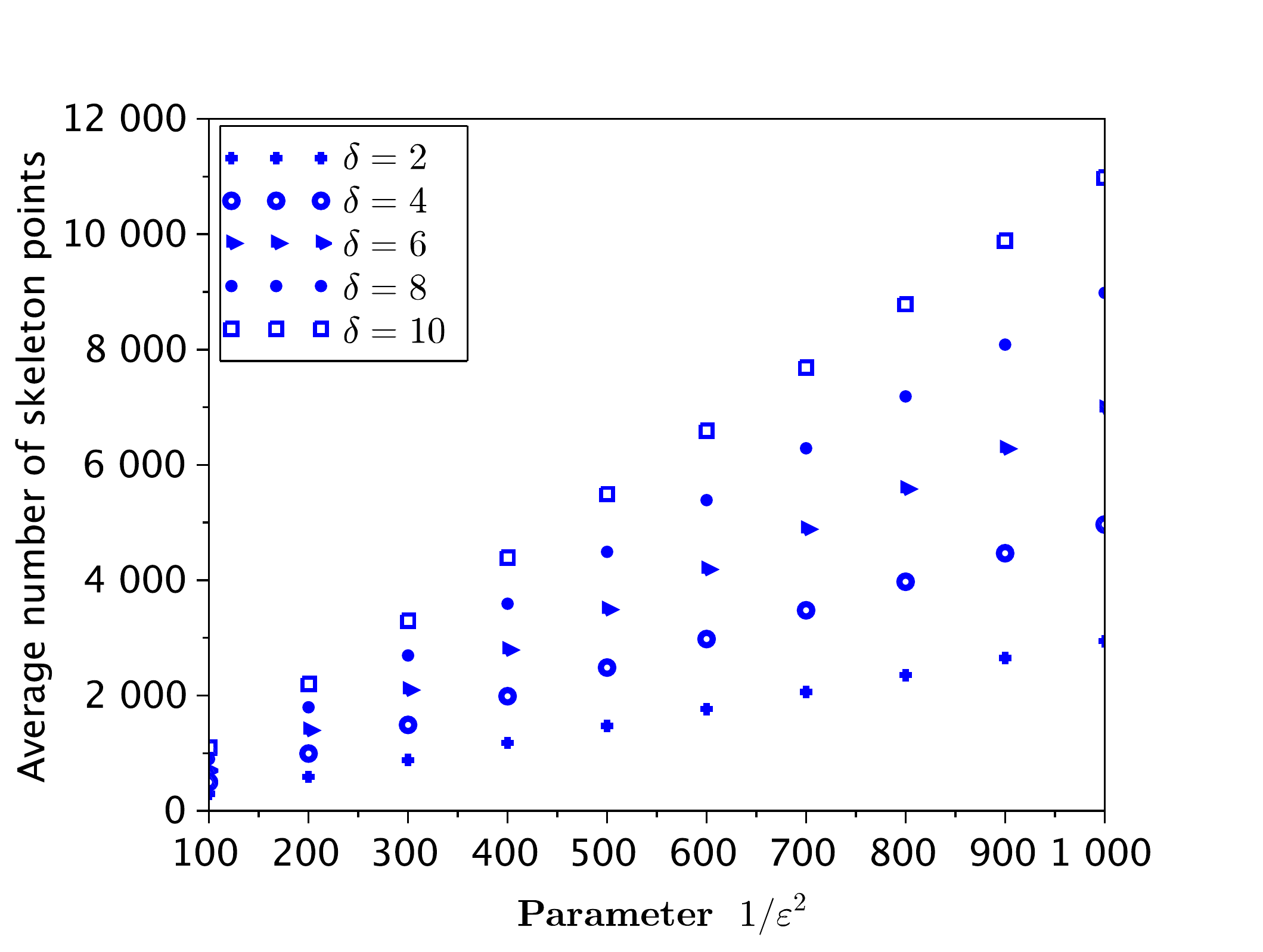}
\caption{\small Average number of skeleton points on the time interval $[0,1]$ versus the dimension of the Bessel process (left, with $\varepsilon=0.05$) or versus the parameter $1/\varepsilon^2$ (right) with $y_0=0.5$ and a sample size for the computation of the empirical mean $1\,000$.}
\label{fig:my_labell}
\end{figure}

In order to completely illustrate the strong approximation of the Bessel processes, let us consider numerical experiments for non integer dimensions. In this case,  Algorithm $({\rm BeS})_\delta^w$ permits to generate the Bessel skeletons. Of course, due to the decomposition related to Shiga-Watanabe's property, we need to observe both a sequence of spheroids for the Bessel process corresponding to the integer part of the dimension and a sequence of spheroids for the fractional part. That's why it is reasonable to see a large number of skeleton points in order to approximate the paths. For instance, for a Bessel process of dimension $\delta=2.2$, the average of this random number represented by the histogram of Figure \ref{fig:my_label:bis} (left) is about  $68\,130$ 
 while the average in the particular $d=2$ dimension (Figure \ref{fig:my_label} -- left) is approximately equal to $1767$. This sharp increase strongly depends on the value of the parameter $w$ which determines the size of the spheroids of both the integer and fractional part of the algorithm. The challenge is therefore to obtain a balanced repartition. The optimal choice of the parameters $(w_i, w_f)$, satisfying the identity $w_f+2\sqrt{w_i}=1$, is illustrated by different numerical experiments in Figure \ref{fig:my_label:bis} (right). We observe that this optimal choice depends on the Bessel dimension and can be compared to the heuristic choice suggested in Corollary \ref{cor:bessne} which is represented by a vertical line in the figure.

\begin{figure}[h]
\centering
\includegraphics[width=8.5cm]{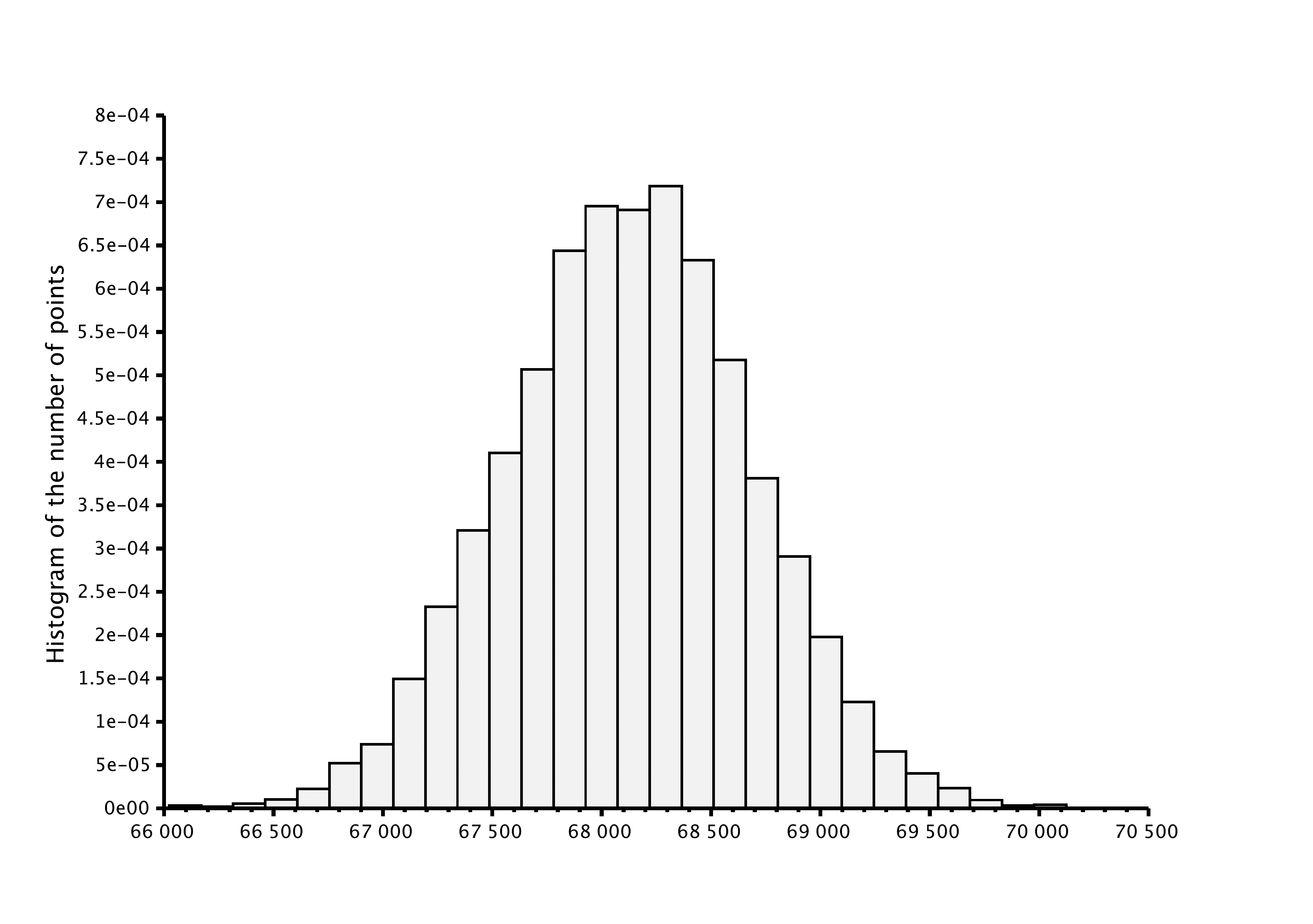}\hspace*{-0.5cm}\includegraphics[width=8.5cm]{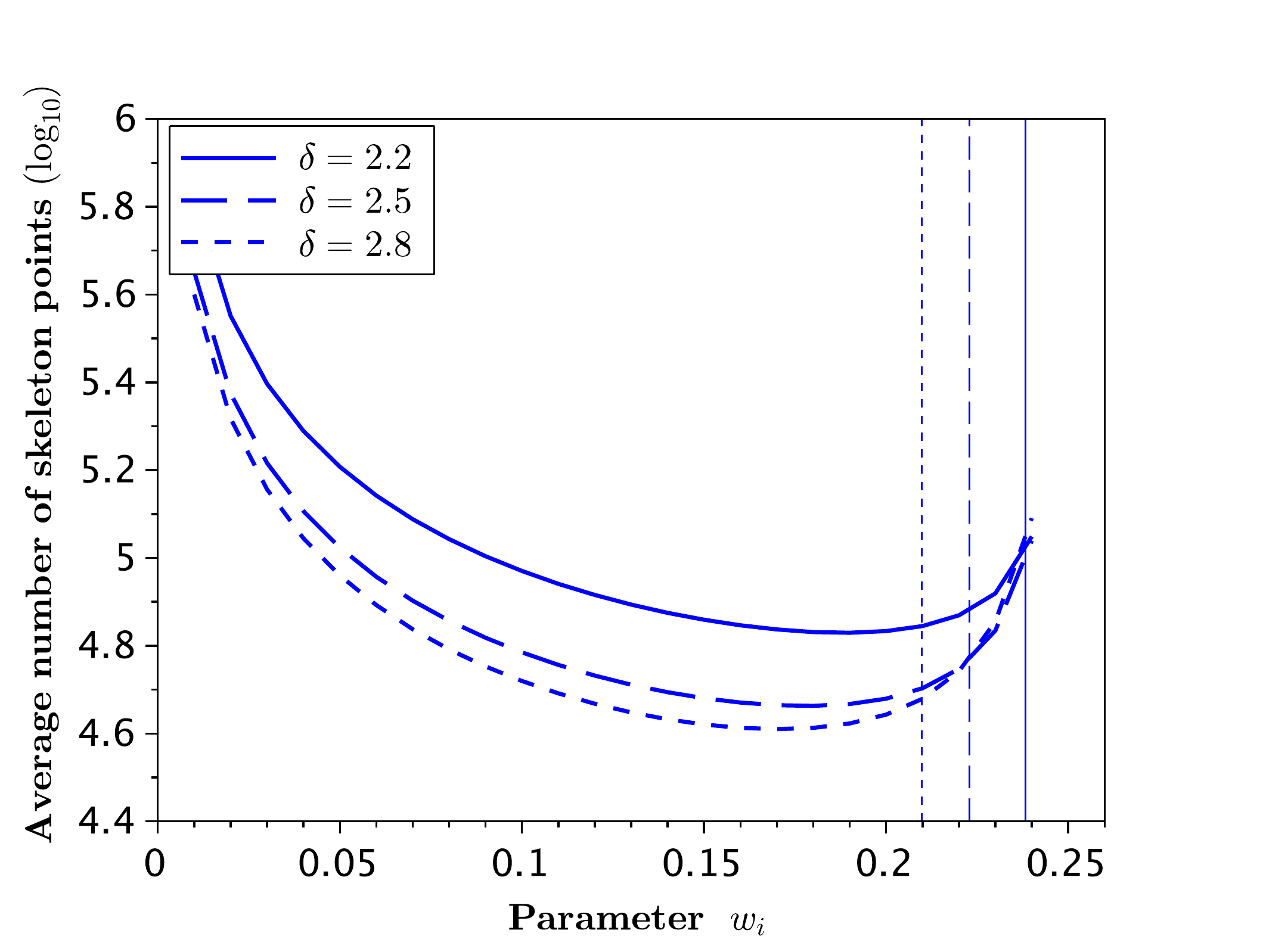}
\caption{\small Histogram of the number of points $N_T^\varepsilon$ for a sample of Bessel processes of dimension $2.2$  with $w_f=0.5$ and $w_i=0.0625$ (left) -- Average number of skeleton points (in $\log_{10}$-scale) versus the parameter value $w_i$ (right). Both figures concern paths observed on the time interval $[0,3]$ with $y_0=0.5$, $\varepsilon=0.05$. Sample sizes: $10\,000$ (left), $1\,000$ for the empirical mean (right).}
\label{fig:my_label:bis}
\end{figure}
\subsection{Related processes}
Several stochastic processes related to the Bessel one play an important role in the finance literature. Here the aim of the discussion is not to present a complete overview of financial models which could be concerned by our approximation procedure but rather to present few examples. Let us first recall the statement of  Definition \ref{def}:  $(y_t^\varepsilon)$ is an $\varepsilon$-strong approximation of the diffusion process $(X_t)$ on the fixed time interval $[0,T]$ if there exists $(x^\varepsilon_t)$ satisfying
\begin{equation*}
\sup_{t\in[0,T]}|X_t-x^\varepsilon_t|\le \varepsilon\quad\mbox{a.s.}
\end{equation*}
such that $(y^\varepsilon_t)$ and $(x^\varepsilon_t)$ are identically distributed. Consequently, as a by-product, any approximation of the Bessel path $(Z^{\delta,y}_t,\, t\le T)$ leads to an approximation of the path $(Y_t,\,t\le T_0)$ defined by
\begin{equation}\label{eq:rel}
Y_t:=f(t,Z^{\delta,y}_{\rho(t)}),
\end{equation}
with $f:\mathbb{R}_+^2\to \mathbb{R}$ a continuous function and $\rho:\mathbb{R}_+\to\mathbb{R}_+$ a strictly monotonous time change function. Of course the identity \eqref{eq:rel} implies a change of accuracy for the approximation and of course a change in the time interval under consideration. This adaptation is rather immediate and permits to handle with a large class of processes. In the family of financial term structure models, we can for instance focus our attention on the square-root process or CIR model (Cox-Ingersoll-Ross). This process appearing in the seminal paper of Cox et al. \cite{cox1985theory} is the object of many studies and is simply defined as the positive solution of 
\begin{equation}\label{eq:CIR}
dY_t=k(\theta-Y_t)\,dt+\sigma \sqrt{Y_t}\,dB_t,\quad Y_0=x,
\end{equation}
under the conditions $k\theta>0$ and $\sigma>0$. Using stochastic calculus permits to point out that the process $Y$ satisfies (not especially with respect to the same Brownian motion) \eqref{eq:rel} with
\[
f(t,x)=e^{-kt}x^2,\quad \rho(t)=\frac{\sigma^2}{4k}\,(e^{kt}-1),\quad \delta=\frac{4k\theta}{\sigma^2}\ \ \mbox{and}\ \  y=\sqrt{x}.
\]
Let us note that the coefficients of the diffusion \eqref{eq:CIR} are time-homogeneous. It is possible to extend this family of term structure models to inhomogeneous processes (see, for instance \cite{jeanblanc2009mathematical}) solution to
\[
dY_t=(a-\lambda(t)Y_t)\,dt+\sigma \sqrt{Y_t}\,dB_t,\quad Y_0=x,
\]
where $\lambda$ is a continuous function. We are still able to emphasize a relation like \eqref{eq:rel} with the following functions and parameters (see, for instance Theorem 6.3.5.1 in \cite{jeanblanc2009mathematical}):
\[
f(t,x)=\frac{\sigma^2}{4\rho'(t)}\,x^2,\quad \rho(t)=\frac{\sigma^2}{4}\,\int_0^t\exp\Big\{\int_0^s \lambda(u)\,du\Big\}\,ds,\quad \delta=\frac{4a}{\sigma^2}\ \ \mbox{and}\ \  y=\sqrt{x}.
\]
Both the homogeneous and the inhomogeneous CIR models are related to the squared Bessel process through a time dependent linear transformation. Modelling the volatility in finance actually requires to handle with other process: the CEV model (Constant Elasticity of Variance) which satisfies:
\[
dY_t=Y_t(\mu\,dt+\sigma Y_t^{\beta}\,dB_t),\quad t\ge 0,\quad Y_0=x.
\]
Under particular conditions, the process $(Y_t)_{t\ge 0}$ satisfies \eqref{eq:rel} with $f(t,x)=e^{\mu t}x^{\alpha}$, $\alpha$ depending on $\beta$ and being different from the square (see for instance \cite{jeanblanc2009mathematical}). For option pricing in finance, it is therefore of prime interest to simulate precisely trajectories of underlying assets which follow CIR or CEV models. It permits to estimate the prices of derivatives like European options but also paths dependent options like Asian or barrier options.

As already seen, families of stochastic models are directly related to the Bessel process through the identity \eqref{eq:rel}. If the function $f$ is globally Lipschitz continuous with respect to the space variable then the Bessel $\varepsilon$-strong approximation $(y_t^\varepsilon)_{t\ge 0}$ allows to generate a $\varepsilon'$-approximation of $(Y_t)_{t\ge 0}$ which is given by $(f(t,y_{\rho(t)}^\varepsilon))_{t\ge 0}$, the parameters $\varepsilon$ and $\varepsilon'$ being related through the Lipschitz constant and the time interval under consideration. 

If the transformation $f$ is not uniformly Lipschitz with respect to the space variable (CIR and CEV models, for instance), then the Bessel $\varepsilon$-strong approximation permits to obtain a lower-bound and an upper-bound of any path $(Y_t)_{t\ge 0}$ depending on $\varepsilon$. These bounds imply a precise estimation of path-dependent characteristics and play therefore a crucial role for applications. Let us consider the following example: a CIR model observed on the time interval $[0,2]$ with the parameters: $k=2$, $\theta=1/3$, $\sigma =1$ and the starting value $x=1$. It is therefore expressed by $Y_t=f(t,Z_{\rho(t)}^{\delta,y})$ for all $t\in[0,2]$. Introducing the $\varepsilon$-strong approximation of the Bessel process $(y^\varepsilon_t)_{t\ge 0}$, based on the Bessel skeleton  $({\rm BeS})_\delta$ or $({\rm BeS})_\delta^w$ that is $((u_n^\varepsilon,s_n^\varepsilon)_{n\ge 1},(y_n^\varepsilon)_{n\ge 0})$, we obtain the almost surely bounds:
\[
f(t,y_{\rho(t)}^\varepsilon-\varepsilon)\le Y_t \le f(t, y_{\rho(t)}^\varepsilon+\varepsilon),\quad \forall t\in[0,2],
\]
since the function $x\mapsto f(t,x)$ is increasing. In Figure \ref{fig:my_label:CIR} (right), one generation of the upper and lower bounds is represented for any $t\in \{\rho^{-1}(s_n)\}_{n\ge 1}\cap[0,2]$.  The accuracy of the approximation is not uniform since it depends on the value of the process and on the time variable. More precisely, we propose to define the precision variable $P_\varepsilon$ by
\begin{equation}
\label{eq:def:precis}
P_\varepsilon:=\sup_{t\in[0,2]}\Big|f(t, y_{\rho(t)}^\varepsilon+\varepsilon)-f(t,y_{\rho(t)}^\varepsilon-\varepsilon)\Big|.
\end{equation}
Using the explicit expression of the function $f$ associated with the CIR model, we obtain an explicit expression of the accuracy depending on the Bessel skeleton:
\[
P_\varepsilon=4\,\varepsilon\sup \Big\{y_{n}\, e^{-2\rho^{-1}(s_n)}\ \mbox{s.t.}\ s_n\le \rho^{-1}(2)\Big\}.
\]
The probability distribution of the ration $P_\varepsilon/\varepsilon$ is represented in Figure \ref{fig:my_label:CIR} (left): we observe that the accuracy is close to four times the initial condition of the Bessel process. 

\begin{figure}[h]
\centering
\includegraphics[width=8.5cm]{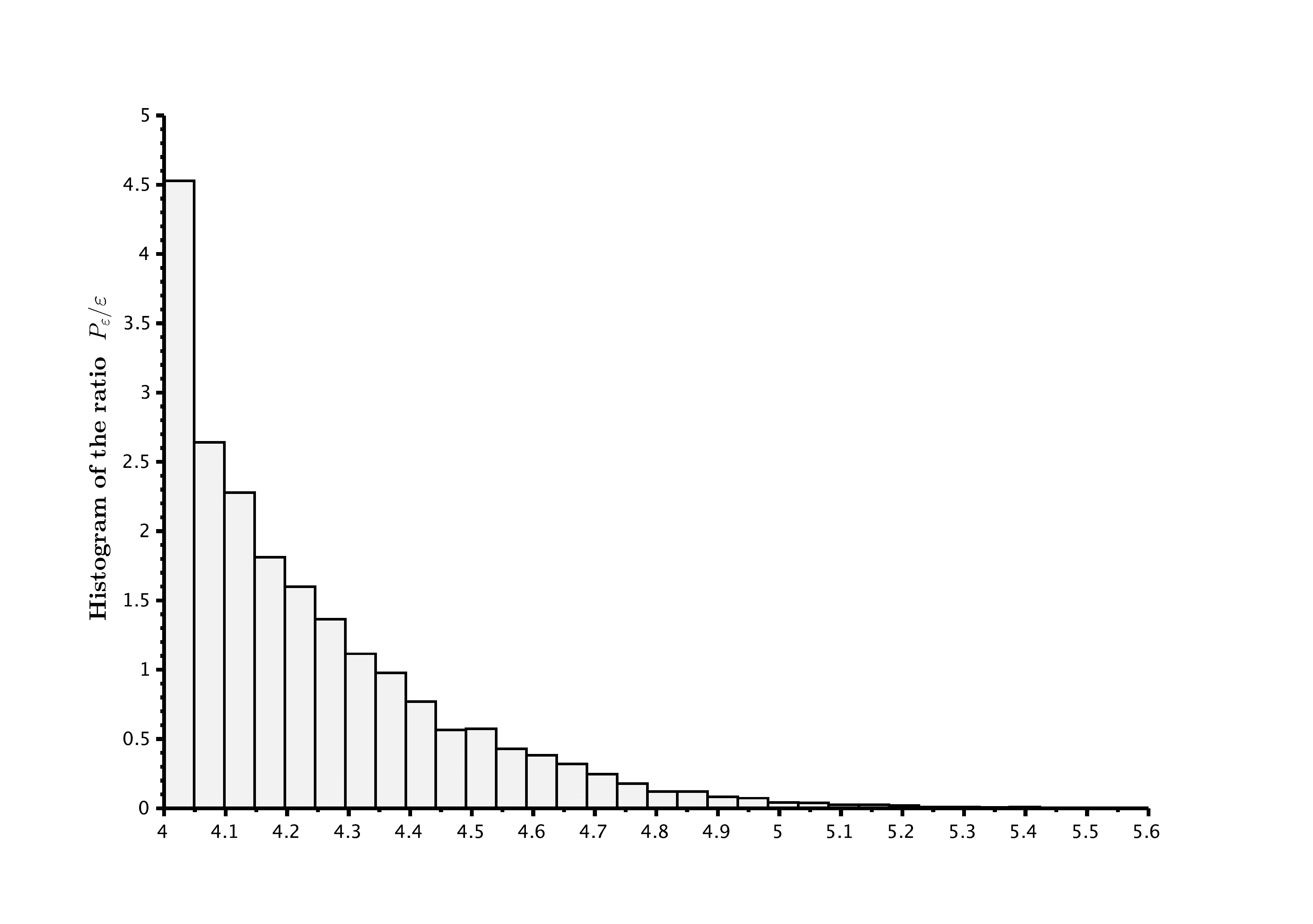}\hspace*{-2.4cm}\includegraphics[width=11cm]{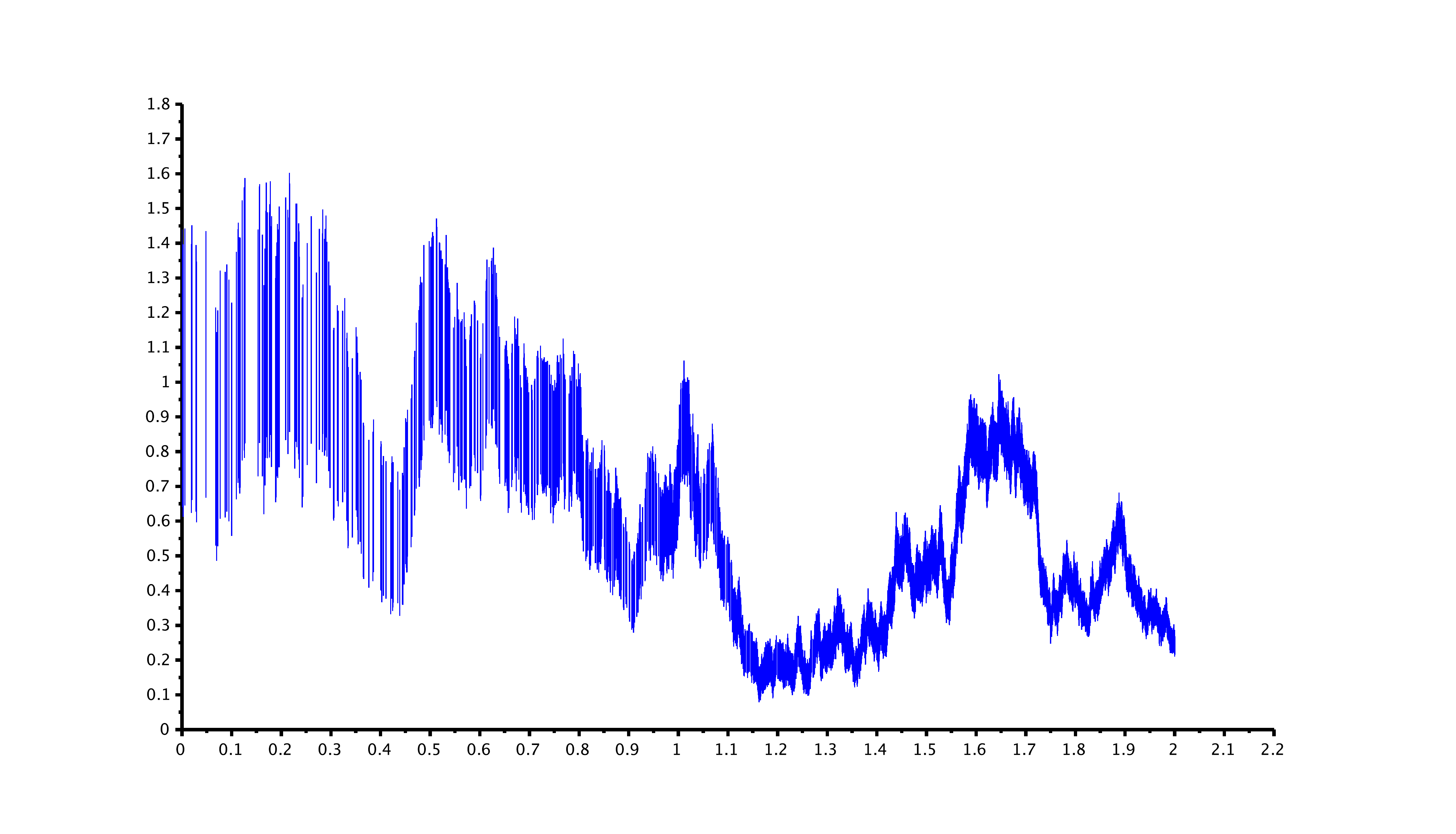}
\caption{\small Histogram of the precision variable $P_\varepsilon/\varepsilon$ (left) and a generation of bounding points (joined by vertical lines). Both figures concern paths of the CIR model observed on the time interval $[0,2]$ with $x=1$, $k=2$, $\theta=1/3$, $\sigma=1$.
Histogram (left): sample size $10\,000$ and $\varepsilon=0.05$. Bounding points (right): $\varepsilon=0.2$.
}
\label{fig:my_label:CIR}
\end{figure}

Of course the difference between the lower and upper paths is not uniformly bounded. This accuracy is nevertheless sufficient in many applications but if the challenge is to reach a uniform bound, then we suggest another approach. The key is to let the size of the spheroids used in the Bessel approximation depend on the space variable: the size is no more fixed once for ever and equal to $\varepsilon$. Such an approach was presented in detail in \cite{deaconu2020strong} for processes defined by $Y_t:=f(t,B_{\rho(t)})$, transformations of the time-changed Brownian motion and can be adapted to the Bessel case.
\bibliographystyle{alpha}
\bibliography{biblio}

\newcommand{\etalchar}[1]{$^{#1}$}
\begin{thebibliography}{PJR{\etalchar{+}}16}

\bibitem[BT95]{bally1995weakeuler}
V.~Bally and D.~Talay.
\newblock The {E}uler scheme for stochastic differential equations: error
  analysis with {M}alliavin calculus.
\newblock {\em Mathematics and Computing in Simulation}, 38(1-3):35--41, 1995.

\bibitem[CH13]{chen2013localization}
N.~Chen and Z.~Huang.
\newblock Localization and exact simulation of brownian motion-driven
  stochastic differential equations.
\newblock {\em Mathematics of Operations Research}, 38(3):591--616, 2013.

\bibitem[CIR85]{cox1985theory}
J.C. Cox, J.E. Ingersoll, and S.A. Ross.
\newblock A theory of the term structure of interest rates.
\newblock {\em Econometrica}, 53:385--408, 1985.

\bibitem[CR08]{casella2008exact}
B.~Casella and G.~O. Roberts.
\newblock Exact {M}onte {C}arlo simulation of killed diffusions.
\newblock {\em Advances in Applied Probability}, 40(1):273--291, 2008.

\bibitem[DH13]{Deaconu-Herrmann-2013}
M.~Deaconu and S.~Herrmann.
\newblock Hitting time for {B}essel processes---walk on moving spheres
  algorithm ({W}o{MS}).
\newblock {\em Ann. Appl. Probab.}, 23(6):2259--2289, 2013.

\bibitem[DH17]{Deaconu-Herrmann-2017}
M.~Deaconu and S.~Herrmann.
\newblock Simulation of hitting times for {B}essel processes with non-integer
  dimension.
\newblock {\em Bernoulli}, 23(4B):3744--3771, 2017.

\bibitem[DH20]{deaconu2020strong}
M.~Deaconu and S.~Herrmann.
\newblock Strong approximation of particular one-dimensional diffusions.
\newblock {\em arXiv}, 2006.04378, 2020.

\bibitem[DNS12]{dereich2012eulercir}
S.~Dereich, A.~Neuenkirch, and L.~Szpruch.
\newblock An {E}uler-type method for the strong approximation of the
  {C}ox–{I}ngersoll–{R}oss.
\newblock {\em Proceedins of the Royal Society A}, 468:1105--1115, 2012.

\bibitem[Gob00]{gobet2000}
E.~Gobet.
\newblock Weak approximation of killed diffusion using {E}uler schemes.
\newblock {\em Stochastic Processes and their Applications}, 87(2):167--197,
  2000.

\bibitem[GS13]{giesecke2013exact}
K.~Giesecke and D.~Smelov.
\newblock Exact sampling of jump diffusions.
\newblock {\em Operations Research}, 61(4):894--907, 2013.

\bibitem[JKH11]{jourdainhiga2011euler}
B.~Jourdain and A.~Kohatsu-Higa.
\newblock Approximation of solutions of stochastic differential equations.
\newblock {\em Progress in Probability}, 65:141--164, 2011.

\bibitem[JYC09]{jeanblanc2009mathematical}
M.~Jeanblanc, M.~Yor, and M.~Chesney.
\newblock {\em Mathematical methods for financial markets}.
\newblock Springer Science \& Business Media, 2009.

\bibitem[KP92]{kloedenplaten1999Euler}
P.E. Kloeden and E.~Platen.
\newblock {\em Numerical solution of stochastic differential equations},
  volume~23 of {\em Applications of Mathematics (New York)}.
\newblock Springer-Verlag, Berlin, 1992.

\bibitem[MGY20]{muller2020eulernonsmooth}
T.~M\"uller-Gronbach and L.~Yaroslavtseva.
\newblock On the performance of the euler–maruyama scheme for {SDE}s with
  discontinuous drift coefficient.
\newblock {\em Ann. Inst. H. Poincaré Probab. Statist.}, 56(2):1162--1178,
  2020.

\bibitem[PJR{\etalchar{+}}16]{pollock2016exact}
M.~Pollock, A.~M. Johansen, G.~O. Roberts, et~al.
\newblock On the exact and {$\varepsilon$}-strong simulation of (jump)
  diffusions.
\newblock {\em Bernoulli}, 22(2):794--856, 2016.

\bibitem[SW73]{Shiga}
T.~Shiga and S.~Watanabe.
\newblock Bessel diffusions as a one-parameter family of diffusion processes.
\newblock {\em Z. Wahrscheinlichkeitstheorie und Verw. Gebiete}, 27:37--46,
  1973.

\end{thebibliography}
\end{document}